\crefname{hypothesis}{Hypothesis}{Hypotheses}
\newcommand{\T}{\mathrm{T}}
\newcommand{\Tr}{\mathrm{Tr}}
\newcommand{\etal}{et.al. }
\title{A Riemannian smoothing steepest descent method for non-Lipschitz optimization on submanifolds \thanks{Submitted to the editors on \today.
\funding{
C. Zhang was supported in part by Natural Science Foundation of Beijing (No. 1202021).
X. Chen was supported in part by Hong Kong Research Council Grant PolyU15300219. S. Ma was supported in part by NSF grants DMS-1953210 and CCF-2007797, and UC Davis CeDAR (Center for Data Science and Artificial Intelligence Research) Innovative Data Science Seed Funding Program.
}}}
\author{Chao Zhang\thanks{Department of Applied Mathematics,
Beijing Jiaotong University, Beijing 100044, China. (\email{zc.njtu@163.com}).}
\and Xiaojun Chen\thanks{Department of Applied Mathematics,
The Hong Kong Polytechnic University, Hung Hom, Kowloon, Hong Kong, China. (\email{xiaojun.chen@polyu.edu.hk}).}
\and  Shiqian Ma\thanks{Department of Mathematics, University of California, Davis, CA 95616, USA. (Corresponding author. \email{sqma@ucdavis.edu}).}
}
\begin{document}
\maketitle

\begin{abstract}
In this paper, we propose a Riemannian smoothing steepest descent method to minimize a nonconvex and non-Lipschitz function on submanifolds. The generalized subdifferentials on Riemannian manifold and the Riemannian gradient sub-consistency are defined and discussed. We prove that any accumulation point of the sequence generated by the Riemannian smoothing steepest descent method is a stationary point associated with the smoothing function employed in the method, which is necessary for the local optimality of the original non-Lipschitz problem. Under the Riemannian gradient sub-consistency condition, we also prove that any accumulation point is a Riemannian limiting stationary point of the original non-Lipschitz problem. Numerical experiments are conducted to demonstrate the efficiency of the proposed method.
\end{abstract}
 
\begin{keywords}
Riemannian submanifold, Non-Lipschitz, Smoothing steepest descent method, Riemannian generalized subdifferentials, Riemannian gradient sub-consistency
\end{keywords}

\begin{AMS}
  65K10, 90C26, 90C46
\end{AMS}

\section{Introduction}
\label{sec:intro}

We consider the Riemannian optimization problem
\begin{eqnarray}\label{model}
\min \ f(x),  \quad x\in {\cal M},
\end{eqnarray}
where $\cal M$ is a complete Riemannian submanifold of ${\mathbb{R}}^n$ and $f: \mathbb{R}^n \to\mathbb{R}$ is a proper lower semi-continuous function which may be nonsmooth and non-Lipschitzian. Such problems arise in a variety of applications in signal processing, computer vision, and data mining \cite{Adler,Bacak,JuSun}.

Many classical algorithms for unconstrained and smooth optimization have been extended from the Euclidean space to the Riemannian manifolds, such as the gradient descent algorithm, the conjugate gradient algorithm, the quasi-Newton algorithm and the trust region method \cite{Absil1,Absilbook,WHuang1}. Recently, Riemannian optimization with a nonsmooth but Lipschitz continuous objective function has been considered in the literature. Here the smoothness and Lipschitz continuity are interpreted when the function in question is  considered in the ambitent Euclidean space. The Clarke subdifferential of functions over manifolds has been defined and its properties have been discussed in \cite{Hosseini1}. Several algorithms have been proposed based on the notion of Clarke subdifferential. For example, Hosseini and Uschmajew \cite{Hosseini} proposed the Riemannian gradient sampling algorithm. This algorithm approximats the subdifferential using the convex hull of transported gradients from tangent spaces of randomly generated nearby points to the tangent space of the current space. The $\epsilon$-subgradient algorithm \cite{Grohs} is a steepest descent method where the descent directions are obtained by a computable approximation of the $\epsilon$-subdifferential. The line search algorithms \cite{Hosseini2} include the nonsmooth Riemannian BFGS algorithm as a special case. For both the $\epsilon$-subgradient algorithm and the line search algorithms, either the algorithms terminate
after a finite number of iterations with the $\epsilon$-subgradient-oriented descent direction being $0$, or any accumulation point is a Clarke stationary point. Other methods for nonsmooth optimization over Riemannian manifolds include the Riemannian subgradient method \cite{Li}, the Riemannian ADMM \cite{Lai2014,Kovnatsky2016}, the manifold proximal gradient method \cite{chen2019proximal,Huang-Wei-RPG,Shiqian,Wang-stochastic-ManPG}, manifold proximal point method \cite{Chen-ManPPA-2020}, manifold proximal linear method \cite{wang-manpl-2020}, and manifold augmented Lagrangian method \cite{Chen2016,Zhu2017,Ding-2021}.

To the best of our knowledge, there do not exist optimization algorithms for solving Riemannian optimization problems with general non-Lipschitz objective functions, although the Riemannian generalized subdifferentials have been studied for nonsmooth and non-Lipschitz optimization \cite{Ledyaev}. Non-Lipschitz optimization in Euclidean space finds many important applications, including but not limited to, finding sparse solutions in singal processing and data mining \cite{ChenGuo,XuF2010,Liu-Dai-Ma-2015,Liu-Ma-Dai-Zhang-2016,Luo2018}, and neat edge in image restoration \cite{Bian2015,CNZ2012,Zeng2019}. {Smoothing methods with a proper updating scheme for the smoothing parameter are efficient for solving large-scale nonsmooth optimization in Euclidean space} \cite{Chen,CNZ2012,Niu,ChenZhou,Zhang,Zhang2}.
With a fixed smoothing parameter, one solves the smoothed problem to update the iterate. Certain strategy is then applied to decide weather and how the smoothing parameter needs to be changed.  Under the so-called gradient consistency property, it can be shown that any accumulation point of the smoothing method is a limiting stationary point of the original nonsmooth optimization problem. The gradient consistency naturally holds for smoothing functions arising in various real applications with nonsmooth and Lipschitz objective functions. Smoothing methods have been widely used to solve unconstrained non-Lipschitz optimization problems \cite{Niu,ChenZhou}, and constrained non-Lipschitz optimization with feasible region being convex sets \cite{Zhang,Zhang2}. However, minimizing a non-Lipschitz function on a nonconvex set has not been widely considered in the literature. In \cite{ChenGuo}, an augmented Lagrangian method for non-Lipschitz nonconvex programming was proposed where the constraint set is nonconvex. 

In this paper, we extend the smoothing steepest descent method in Euclidean space 
to Riemannian submanifolds. The smoothing steepest descent method is a special case of the smoothing projected gradient method for unconstrained nonsmooth optimization \cite{Zhang}. Our Riemannian smoothing steepest descent method (RSSD) uses the Riemannian gradient of the smoothing function in each iteration. Therefore, we do not need to sample points around the current point to get (sub)gradient information of the current point. This avoids the vector transport comparing with existing gradient-type algorithms such as the Riemannian gradient sampling algorithm \cite{Hosseini} and the Riemannian $\epsilon$-subgradient algorithm \cite{Grohs}. Our RSSD is easy to implement and can be shown to converge to a stationary point of the Riemannian optimization with non-Lipschitz objective.

The rest of this paper is organized as follows. In Section \ref{sec:preliminaries}, we give a brief review on some basic concepts and properties relating to Riemannian manifold, and the generalized subdifferentials for non-Lipschitz functions in Euclidean space. In Section \ref{sec:Riemannian gradient}, we define the generalized subdifferentials for non-Lipschitz functions on Riemannian submanifolds and discuss their properties. We also define and discuss the Riemannian gradient sub-consistency that is essential to show that any accumulation point of our proposed RSSD method is a Riemannian limiting stationary point. In Section \ref{sec:Riemannian RSSD}, we propose our RSSD method and analyze its convergence behavior. In Section \ref{sec:numerical}, we conduct numerical experiments on two imporant applications: finding a sparse vector in a subspace, and the sparsely-used orthogonal complete dictionary learning. Finally, we draw some concluding remarks in Section \ref{sec:conclude}.

\section{Preliminaries}\label{sec:preliminaries} 
We define some notation first. Throughout this paper, ${\cal M}$ denotes a finite dimensional complete Riemannian submanifold embeded in an Euclidean space. We consider the Riemannian metric on $\cal M$ that is induced from the Euclidean inner product; i.e., for any $\xi, \eta \in \T_x\cal M$, we have $\langle\xi,\eta\rangle_x = \Tr(\xi^\top\eta)$, where $\T_x\cal M$ denotes the tangent space of $\cal M$ at $x$, and $\Tr(Z)$ denotes the trace of matrix $Z$. The cotangent space at $x$ via the Riemannian metric is denoted as $\T_x{\cal M}^*$. We use $\T {\cal M}$ to denote the tangent bundle, i.e., the set of all tangent vectors: $\T{\cal M}:= \bigcup_{x\in {\cal M}} \T_x {\cal M}$. We use $\|x\|$ to denote the Euclidean norm when $x$ is a vector, and the Frobenius norm when $x$ is a matrix. We use $B_{x, \delta} = \{y\mid \|y-x\|\le \delta\}$ to represent a neighborhood of $x$ with radius $\delta$. For subset $D\subseteq \mathbb{R}^n$, a function $h\in C^1(D)$ means that $h$ is smooth on $D$.

An important concept in Riemannian optimization is the retraction operation and it is defined below.  

\begin{definition}\label{retraction}(Retraction, see \cite{Absilbook}). 
A retraction on a manifold $\cal M$ is a smooth mapping $R: \T{\cal M} \to {\cal M}$ with the following properties. Here $R_x$ denotes the restriction of $R$ to the tangent space $T_x {\cal M}$.
\begin{itemize}
\item[(i)] $R_x(0_x) = x$, where $0_x$ denotes the zero element of $\T_x {\cal M}$.
\item[(ii)] It holds that  
\[d R_x(0_x) = {id}_{\T_x {\cal M}},\]
where $d R_x$ is the differential of $R_x$, and ${id}_{\T_x {\cal M}}$ denotes the identity map on $\T_x {\cal M}$.  
\end{itemize}
\end{definition}

By the inverse function theorem, we know that $R_x$ is a local diffeomorphism (see, e.g., \cite{Hosseini2}). We now review some important concepts and properties related to generalized subgradients, subdifferentials {and subderivatives} of non-Lipschitz functions in Euclidean space $\mathbb{R}^n$. They are specializations of \cite[Definitions 8.3, 8.1]{Rockafellar} to our setting (note that in our case the function $f$ is finite-valued.)

\begin{definition}\label{subdif-Rn}(Subgradients). We consider a proper lower semi-continuous function $f: \mathbb{R}^n \to  \mathbb{R}$. For a vector $v\in \mathbb{R}^n$, we say that
\begin{itemize}
\item[(i)] $v$ is a regular subgradient of $f$ at $\bar x$, written as $v\in \hat \partial f(\bar x)$, if
\[
f(x) \ge f(\bar x) + \langle v, x - \bar x \rangle + o(\|x- \bar x\|),
\]
or equivalently
\[
\liminf_{x\to \bar x, \ x \ne \bar x} \frac{f(x)-f(\bar x) - \langle v, x - \bar x \rangle}{\|x - \bar x\|} \ge 0;
\]
\item[(ii)] $v$ is a (general) limiting subgradient of $f$ at $\bar x$, written as $v\in \partial f(\bar x)$, if there exist $(x^{\nu}, f(x^{\nu}))  \to (\bar x, f(\bar x))$ and $v^{\nu} \in \hat \partial f(x^{\nu})$ with $v^{\nu} \to v$;
\item[(iii)] $v$ is a horizontal subgradient of $f$ at $\bar x$, written as $v\in \partial^{\infty} f(\bar x)$, if the same conditions in (ii) hold, except that instead of $v^{\nu} \to v$ one has $\lambda^{\nu} v^{\nu} \to v$ for some sequence $\lambda^{\nu} \downarrow 0$.
\end{itemize}
\end{definition}

Here $\hat \partial f(\bar x)$, $\partial f(\bar x)$, and $\partial^{\infty} f(\bar x)$ are called the regular (Fr\'{e}chet), limiting, and horizontal subdifferentials of $f$ at $\bar x$, respectively. According to   \cite{Rockafellar}, $\partial^{\circ} f(\bar x)$ is called the Clarke subdifferential if
\[
\partial^{\circ} f(\bar x) = {\rm conv} \{\partial f(\bar x) + \partial^{\infty} f(\bar x)\},
\]
where ${\rm conv}$ is the convex hull.

\begin{definition}\label{def:subderivative}(Subderivative).
For a proper lower semi-continuous function $f: \mathbb{R}^n \to  \mathbb{R}$,
the subderivative function $df(\bar x) : \mathbb{R}^n \to \mathbb{\bar{R}}$ is defined as
\[
df(\bar x)(\bar w) := \liminf_{\tau\downarrow 0,\ w\to \bar w} \frac{f(\bar x + \tau w) - f(\bar x)}{\tau}.
\]
\end{definition}

We have the two equivalent characterizations for the regular subdifferential in the following two propositions, coming from \cite[Exercise 8.4, pp. 301;  Proposition 8.5, pp. 302]{Rockafellar}. 

\begin{proposition}\label{ec-Rn}(Regular subgradient from subderivative).  
It holds that 
\[
\hat \partial f(\bar x) = \{ v \mid \langle v, w\rangle \le df(\bar x)(w)\quad \mbox{for all}\ w \}.
\]
\end{proposition}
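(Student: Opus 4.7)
The plan is to prove the two inclusions separately, each by exploiting that the liminf definitions on either side are controlled by the same limiting process $x \to \bar x$.

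For the inclusion $\hat\partial f(\bar x) \subseteq \{v \mid \langle v, w\rangle \le df(\bar x)(w)\text{ for all }w\}$, I would fix $v \in \hat\partial f(\bar x)$ and an arbitrary direction $\bar w$, then show $\langle v, \bar w\rangle \le df(\bar x)(\bar w)$. The idea is to pick any sequences $\tau_k \downarrow 0$ and $w_k \to \bar w$ and set $x_k = \bar x + \tau_k w_k$. Writing
\[
\frac{f(x_k)-f(\bar x)}{\tau_k} \;=\; \frac{\|x_k-\bar x\|}{\tau_k}\cdot \frac{f(x_k)-f(\bar x)-\langle v, x_k-\bar x\rangle}{\|x_k-\bar x\|} \;+\; \langle v, w_k\rangle ,
\]
I can take the liminf: when $\bar w \ne 0$, $x_k \ne \bar x$ eventually and $\|x_k-\bar x\|/\tau_k = \|w_k\| \to \|\bar w\|$, while the middle factor has liminf $\ge 0$ by the regular-subgradient definition. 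Taking the infimum over all such sequences yields $df(\bar x)(\bar w) \ge \langle v, \bar w\rangle$. The case $\bar w = 0$ is the slightly delicate edge case: it follows by plugging $x = \bar x + \tau w$ into the $o(\|x-\bar x\|)$ defining inequality from Definition~\ref{subdif-Rn}(i) to obtain $\frac{f(\bar x+\tau w)-f(\bar x)}{\tau} \ge \langle v, w\rangle + o(1)\|w\|$, which gives $df(\bar x)(0) \ge 0 = \langle v, 0\rangle$ after taking the liminf.

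For the reverse inclusion, I would argue by contradiction. Suppose $\langle v, w\rangle \le df(\bar x)(w)$ for every $w$ but $v \notin \hat\partial f(\bar x)$. Then the liminf characterization fails, so there exist a sequence $x_k \to \bar x$ with $x_k \ne \bar x$ and a scalar $\alpha < 0$ such that
\[
\frac{f(x_k) - f(\bar x) - \langle v, x_k - \bar x\rangle}{\|x_k - \bar x\|} \;\to\; \alpha .
\]
Setting $\tau_k = \|x_k - \bar x\| \downarrow 0$ and $w_k = (x_k - \bar x)/\tau_k$, we have $\|w_k\| = 1$, so along a subsequence $w_k \to \bar w$ with $\|\bar w\| = 1$. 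Rearranging the above display gives $\frac{f(\bar x + \tau_k w_k) - f(\bar x)}{\tau_k} \to \langle v, \bar w\rangle + \alpha$. Since $df(\bar x)(\bar w)$ is defined as a liminf over \emph{all} admissible sequences, we obtain $df(\bar x)(\bar w) \le \langle v, \bar w\rangle + \alpha < \langle v, \bar w\rangle$, contradicting the hypothesis.

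The main obstacle I anticipate is purely technical: ensuring the direction $\bar w = 0$ is handled correctly in the forward direction, since the defining limit process for $df(\bar x)(0)$ involves both $\tau \downarrow 0$ and $w \to 0$ simultaneously and the denominators $\|x_k - \bar x\|$ can degenerate. Beyond that, the argument is a clean change of variables between the $x \to \bar x$ framing (used for $\hat\partial f$) and the $(\tau, w)$ framing (used for $df$), together with the standard trick of normalizing by $\tau_k = \|x_k - \bar x\|$ to pass from a failed-subgradient sequence to a direction $\bar w$ on the unit sphere where the subderivative inequality can be tested.
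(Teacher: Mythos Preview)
Your argument is correct: the change of variables between the $x\to\bar x$ framing of $\hat\partial f$ and the $(\tau,w)$ framing of $df$ is exactly what is needed, and both inclusions go through as you describe. The only minor point worth tightening is the reverse inclusion when the failed-subgradient quotient tends to $-\infty$ rather than a finite $\alpha<0$; in that case $df(\bar x)(\bar w)=-\infty$ along the extracted direction, which already contradicts $\langle v,\bar w\rangle\le df(\bar x)(\bar w)$ for a finite $v$, so no additional work is required.

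As for comparison with the paper: the paper does not supply its own proof of this proposition. It is stated as a known characterization and attributed to \cite[Exercise~8.4]{Rockafellar}. Your write-up therefore fills in what the paper leaves as a citation, and the approach you take is the standard one (and essentially the argument one would use to solve that exercise).
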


\begin{proposition}\label{prop2.2}(Variational description of regular subgradients). A vector $v$ belongs to $\hat \partial f(\bar x) \Longleftrightarrow$ in some neighborhood of $\bar x$, there is a function $h \le f$ with $h(\bar x) = f(\bar x)$ such that $h$ is differentiable at $\bar x$ with $\nabla h(\bar x) = v$. Moreover $h$ can be smooth with $h(x)< f(x)$ for all $x \ne \bar x$ near $\bar x$.
\end{proposition}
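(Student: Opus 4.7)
The plan is to prove the two implications separately: the reverse direction is immediate from the definition, while the forward direction calls for an explicit construction of a supporting function.

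For ($\Leftarrow$), suppose such an $h$ exists near $\bar x$. Since $h \le f$, $h(\bar x)=f(\bar x)$, and $h$ is differentiable at $\bar x$ with $\nabla h(\bar x)=v$, I would simply write
\[
f(x) - f(\bar x) \;\ge\; h(x) - h(\bar x) \;=\; \langle v,\,x-\bar x\rangle + o(\|x-\bar x\|),
\]
which is precisely the defining inequality for $v \in \hat\partial f(\bar x)$ in Definition~\ref{subdif-Rn}(i).

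For ($\Rightarrow$), the idea is to quantify the slack in the regular-subgradient inequality. Given $v \in \hat\partial f(\bar x)$, define, for small $r>0$,
\[
\mu(r) := \max\Bigl\{0,\ -\inf_{0<\|x-\bar x\|\le r}\frac{f(x)-f(\bar x)-\langle v,x-\bar x\rangle}{\|x-\bar x\|}\Bigr\}.
\]
The defining property of $v\in\hat\partial f(\bar x)$ forces $\mu$ to be nonnegative, nondecreasing, and $\mu(r)\downarrow 0$ as $r\downarrow 0$, and by construction $f(x)\ge f(\bar x)+\langle v,x-\bar x\rangle-\mu(\|x-\bar x\|)\,\|x-\bar x\|$ for all $x$ in a neighborhood of $\bar x$. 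The next step is to pick a smooth radial profile $\phi:[0,\infty)\to[0,\infty)$ satisfying $\phi(0)=0$, $\phi(r)/r\to 0$ as $r\downarrow 0$, and $\phi(r)\ge \mu(r)\,r$ for small $r$; such a $\phi$ can be produced by forming a smooth nondecreasing envelope of the nondecreasing map $r\mapsto \mu(r)\,r$. Setting
\[
h(x) := f(\bar x)+\langle v,x-\bar x\rangle-\phi(\|x-\bar x\|)-\|x-\bar x\|^4,
\]
I would then verify (a) $h(\bar x)=f(\bar x)$; (b) $h\le f$ near $\bar x$ via the dominance $\phi(r)\ge\mu(r)\,r$; (c) $h$ is differentiable at $\bar x$ with $\nabla h(\bar x)=v$ since both subtracted terms are $o(\|x-\bar x\|)$; and (d) the extra $\|x-\bar x\|^4$ term yields the strict inequality $h(x)<f(x)$ whenever $x\ne\bar x$ is near $\bar x$. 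For the ``smooth $h$'' strengthening I would choose $\phi$ of the form $\phi(r)=\psi(r^2)$ with $\psi$ smooth on $[0,\infty)$, so that $h$ becomes smooth in $x$ on a neighborhood of $\bar x$.

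The main obstacle is the construction of the smooth majorant $\phi$: the modulus $\mu$ is only known to vanish monotonically, with no quantitative rate available, so producing a smooth $\phi$ that is simultaneously above $\mu(r)\,r$ and of strictly slower-than-linear growth is the delicate step, carried out by a standard one-dimensional smoothing argument on the nondecreasing envelope of $\mu$. Once $\phi$ has been selected, the remaining verifications for $h$ reduce to short first-order calculations from the defining formula.
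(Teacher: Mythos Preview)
The paper does not supply its own proof of this proposition: it is quoted verbatim as a background result, with the explicit attribution ``coming from \cite[Exercise 8.4, pp.~301; Proposition 8.5, pp.~302]{Rockafellar}.'' So there is no proof in the paper to compare against.

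Your argument is essentially the standard one from Rockafellar--Wets. The reverse implication is exactly right. For the forward implication your modulus $\mu(r)$ is well defined and finite for small $r$ (finiteness uses lower semicontinuity of $f$ on the compact annulus together with the $\liminf\ge 0$ condition near $\bar x$), and it does tend to $0$ monotonically. The construction of a smooth $\phi$ with $\phi(r)\ge\mu(r)r$ and $\phi(r)=o(r)$ is indeed the only nontrivial step; your reduction to $\phi(r)=\psi(r^2)$ with $\psi$ smooth is the right device to get smoothness of $h$ through the origin, and the extra $\|x-\bar x\|^4$ term cleanly enforces the strict inequality. One small point worth making explicit in a full write-up: when you pass to $\psi$, the requirements become $\psi(s)\ge \mu(\sqrt{s})\sqrt{s}$ and $\psi(s)=o(\sqrt{s})$, which is still a ``smooth majorant of a nondecreasing $o(\sqrt{s})$ function'' problem and is handled by the same one-dimensional smoothing lemma you allude to.
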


In the case that $f: {\cal M}\to \mathbb{R}$ is a nonsmooth but locally Lipschitz continuous function, the Clarke subdifferential has also been studied and used in analyzing the convergence of algorithms, see e.g., \cite{Hosseini}. Let
\[
\Omega_f := \{x\in {\cal M}\mid f \ \mbox{is differentiable at}\ x\}.
\]
The Riemannian Clarke subdifferential, denoted by $\partial^{\circ}_{\cal R} f(x)$, is defined as \cite{Hosseini}
\begin{eqnarray}\label{Clarke-sub}
\partial_{\cal R}^{\circ} f(x):= {\rm conv}\left\{\lim_{\ell \to \infty} {\rm grad} f(x_{\ell}) \mid x_{\ell } \to x,\ x_{\ell}\in \Omega_f   \right\},
\end{eqnarray}
where grad dentoes the Riemannian gradient.
An alternative definition of $\partial_{\cal R}^{\circ} f(x)$   \cite{Hosseini} relying on the definition of
subdifferential on linear spaces is
\begin{eqnarray*}
\partial_{\cal R}^{\circ} f(x) = \partial^{\circ}(f\circ R_x)(0_x)
\end{eqnarray*}
for any retraction $R$.

A definition of generalized subdifferentials for nonsmooth non-Lipschitz function on manifold is given as follows by \cite[Definition 3.1]{Ledyaev}.

\begin{definition}\label{LeDef} 
Let $f : {\cal M} \to  \mathbb{R}$ be any  lower semicontinuous function.
The Riemannian Fr\'{e}chet  subdifferential of $f$ at $x\in {\cal M}$ is defined as
\[
\partial_{F} f(x) := \{{dh}(x)\mid  h\in C^1({\cal M}) \ \mbox{and} \ f-h \ \mbox{attains a local minimum at}\ x\},
\]
where $dh(x)$ is the differential of $h$ at $x\in {\cal M}$. 
The Riemannian limiting subdifferential of $f$ at $x\in {\cal M}$ is defined as
\[
\partial f(x):= \{\lim_{\ell \to \infty} v_{\ell}^*\mid v_{\ell}^* \in \partial_{F} f(x_{\ell}),\ (x_{\ell},f(x_{\ell})) \to (x, f(x))\}.
\]
The Riemannian horizontal subdifferential of $f$ at $x\in {\cal M}$ is defined as
\[
\partial^{\infty} f(x):= \{\lim_{\ell \to \infty} t_{\ell} v_{\ell}^*\mid v_{\ell}^* \in  \partial_{F} f(x),\ (x_{\ell},f(x_{\ell})) \to (x, f(x)) \ \mbox{and}\ t_{\ell} \downarrow 0\}.
\]
\end{definition}

Let $h$ be a $C^1(\cal M)$ function at $x$. The differential of $h$ at $x$,  $dh(x) \in \T_x{\cal M}^*$, is an element of $\T_x{\cal M}^*$, which is defined as
\[
 d h(x)(v)  = \langle {\rm grad}h(x),v\rangle, \quad \forall v\in \T_x {\cal M},
\]
where ${\rm grad } h(x)$ is the Riemannian gradient of $h$ at $x\in {\cal M}$.

We use the following definition of a smoothing function on $\mathbb{R}^n$ as in \cite{Zhang2}.

\begin{definition}\label{sf}(Smoothing function).
A function $\tilde f(\cdot,\cdot) : \mathbb{R}^n\times \mathbb{R}_+ \to \mathbb{R}$ is called a smoothing function of $f : \mathbb{R}^n \to \mathbb{R}$, if $\tilde f(\cdot,\mu)$ is continuously differentiable in ${\mathbb{R}}^n$ for any $\mu \in \mathbb{R}_{++}$,  
\begin{eqnarray}\label{sfcond1}
\lim_{z\to x,\ \mu\downarrow 0} \tilde f(z, \mu) = f(x),
\end{eqnarray}
and there exist a constant $\kappa>0$ and a function $\omega: R_{++} \to R_{++}$ such that
\begin{eqnarray}\label{sfcond2}
|\tilde f(x,\mu) - f(x)| \le \kappa \omega(\mu)\quad \mbox{with}\quad \lim_{\mu \downarrow 0} \omega(\mu) = 0.
\end{eqnarray}
\end{definition}
In order to emphasize that $\mu$ is a smoothing parameter, we sometimes also write $\tilde f(\cdot,\mu)$ as $\tilde f_{\mu}(\cdot)$ in this paper.
 
\begin{example}
We use the absolute value function $|t|, t\in \mathbb{R}$ as an example to illustrate the smoothing function. We can use the so-called uniform smoothing function
\begin{eqnarray}\label{uniform-sf}
s_{\mu}(t) = \left\{\begin{array}{ll}
                     |t| & {\rm if}\ |t|\ge \frac{\mu}{2}\\
                     \frac{t^2}{\mu} + \frac{\mu}{4} & {\rm if}\ |t|<\frac{\mu}{2},
                     \end{array}
              \right. 
\end{eqnarray} 
with $\kappa = \frac{1}{4}$ and $\omega(\mu) = \mu$ in \eqref{sfcond2}.

We refer to \cite{Chen} for more examples of smoothing functions. For non-Lipschitz term $|t|^p$ where $0<p<1$, its smoothing function can be defined as $(s_{\mu}(t))^p$, with $\kappa = (\frac{1}{4})^p$ and $\omega(\mu) = \mu^p$ in \eqref{sfcond2}.
\end{example}

\section{Riemannian generalized subdifferentials and Riemannian gradient sub-consistency}\label{sec:Riemannian gradient}

In this section, we define and discuss several genearlized subdifferentials, Riemannian gradient sub-consistency of proper lower semicontinuous functions, and related stationary points of \eqref{model}. These concepts play important roles in the  convergence analysis of our proposed method in the next section.

\subsection{Riemanian generalized subdifferentials}
Motivated by the generalized Clarke subdifferential on Riemannian manifold in \eqref{Clarke-sub}, and the generalized subdifferentials for a lower semicontiunouos function on Riemannian manifold in Definition \ref{LeDef} given by Ledyaev and Zhu \cite{Ledyaev}, we define the generalized subdifferentials for lower semicontinuous functions. {Similar as \cite{Yang} for the nonsmooth but Lipschitz case, we define the generalized subdifferentials on the tangent space, not on the cotangent space as in Definition \ref{LeDef} by \cite{Ledyaev}. Since the Riemannian gradient of a differentiable function is defined on the tangent space, from the computational point of view, we find that it is more reasonable to define the generalized subdifferential of a nonsmooth function on the tangent space. }

\begin{definition}\label{mydef}
Let $f : \mathbb{R}^n \to  \mathbb{R}$ be a lower semicontinuous function. The Riemannian Fr\'{e}chet subdifferential of $f$ at $x\in {\cal M}$ is defined as
\begin{eqnarray}\label{hat_partial_R}
\quad\quad\
 \hat \partial_{\cal R} f(x) :=\{\operatorname{grad} h(x)\mid \ & &  \exists\ \delta>0 \ \mbox{such that}\ h\in C^1(B_{x, \delta}) \ \mbox{and} \  \quad\quad\quad\quad\\
 & &\quad
  f-h\ \mbox{attains a local minimum at}
\ x\ \mbox{on}\ {\cal M} \}.  \nonumber
 \end{eqnarray}
  The Riemannian limiting subdifferential of $f$ at $x\in {\cal M}$ is defined as
 \begin{eqnarray}
 \partial_{\cal R} f(x):= \{\lim_{\ell \to \infty} v_{\ell}^*\mid v_{\ell}^* \in \hat \partial_{\cal R} f(x_{\ell}), (x_{\ell},f(x_{\ell})) \to (x, f(x))\}.
 \end{eqnarray}
 The Riemannian horizontal subdifferential of $f$ at $x\in {\cal M}$ is defined as
\begin{eqnarray}
 \partial^{\infty}_{\cal R} f(x):= \{\lim_{\ell \to \infty} t_{\ell} v_{\ell}^*\mid v_{\ell}^* \in \hat \partial_{\cal R} f(x_{\ell}), (x_{\ell},f(x_{\ell})) \to (x, f(x)) \ \mbox{and}\ t_{\ell} \downarrow 0\}.
 \end{eqnarray}
\end{definition}
 
The Riemannian regular subdifferential is essentially only related to the local property of $h$. 
By Whitney extension theorem \cite{Whitney}, any smooth function on 
$B_{x, \delta} \cap {\cal M}$ can be extended on the whole Euclidean space ${\mathbb{R}}^n$. When ${\cal M} = \mathbb{R}^n$, the Riemannian Fr\'{e}chet, limiting, and horizontal subdifferentials coincide with the usual Fr\'{e}chet, limiting, and horizontal subdifferentials in $\mathbb{R}^n$. When $f$ is Lipschitz continuous, we know that $\partial^{\infty}_{\cal R} f(x)= \{0\}$, and then the Riemannian Clarke subdifferential is
\[
\partial_{\cal R}^{\circ} f(x) = {\rm conv} \{\partial_{\cal R} f(x)\},
\]
which is widely used in the Riemannian optimization literature \cite{Grohs,Hosseini1,Hosseini,Hosseini2}.

We make a brief comparison and build up the relation between Definition \ref{mydef}
and Definition \ref{LeDef}. First we note that $\hat \partial_{\cal R}  f(x) \subseteq \T_x {\cal M} $, and $\partial_F f(x) \subseteq \T_x {\cal M}^*$. There is a one-to-one correspondence between element ${\rm grad}h(x) \in \hat \partial_{\cal R} f(x)$ and $dh(x) \in \partial_F f(x)$. That is, for any $dh(x)\in \partial_F f(x)$, there is a unique $\operatorname{grad} h(x)\in \hat \partial_{\cal R} f(x)$ that corresponds to it.
Moreover, we have
\begin{eqnarray}\label{relation}
dh(x)(\cdot) := \langle \operatorname{grad} h(x), \cdot \rangle,
\end{eqnarray}
because for any $x\in {\cal M}$ and $\xi \in \T_x {\cal M}$, 
\[
\langle \operatorname{grad} h(x), \xi \rangle =  dh(x)(\xi) = \left.\frac{dh(\gamma(t))}{dt} \right|_{t=0},
\]
where $\gamma$  is a curve on $\cal M$ with $\gamma(0) = x$ and $\dot{\gamma}(0) = \xi$.

Using Definition \ref{hat_partial_R}, and the facts that $\cal M$ is a submanifold embedded in $\mathbb{R}^n$ and $h\in C^1(B_{x,\delta})$, we have
\[
\operatorname{grad} h(x) = \operatorname{Proj}_{\T_x {\cal M}} \nabla h(x),
\]
where $\operatorname{Proj}_{\T_{x} {\cal M}} y$ denotes the projection of $y\in \mathbb{R}^n$ onto $\T_x {\cal M}$. Consequently,
\begin{eqnarray}\label{hat_partial_R_1}
\quad\quad\partial_{\cal R} f(x)  = \{\operatorname{Proj}_{\T_{ x} {\cal M}}\nabla h(x) \mid  & &\  \exists\ \delta>0\ \mbox{such that}\ h\in C^1(B_{x,\delta})\ \mbox{and}  \\
            & &\quad\quad \ f-h
 \ \mbox{attains a local minimum at}\ x\ \mbox{on}\ {\cal M}\}. \nonumber
\end{eqnarray}
Note that for any $v \in \hat \partial f(x)$, according to Proposition \ref{prop2.2}, there exists $h\in C^1$, such that $f-h$ attains a local minimum at $x$ on $\mathbb{R}^n$, which is sure to attain a local minimum at $x$ on ${\cal M} \subseteq \mathbb{R}^n$. This, combining with \eqref{hat_partial_R_1}, indicates that
\begin{eqnarray}\label{hat_partial_R_2}
\hat \partial_{\cal R} f(x) \supseteq \{\operatorname{Proj}_{\T_{ x} {\cal M}}v\mid v\in \hat \partial f(x)\}.
\end{eqnarray}
By using Definition \ref{mydef}, we have $\hat \partial_{\cal R} f(x) \subseteq
\partial_{\cal R} f(x)$.

We provide an equivalent characterization of $\hat \partial_{\cal R} f(x)$ below.
\begin{proposition}\label{eqdif}
Let $R$ be any given retraction as defined in Definition \ref{retraction}. Then
 $v\in \hat \partial_{\cal R} f(x) $ if and only if 
  $v\in \T_x {\cal M}$ and the following holds
\begin{eqnarray}\label{pullback}
f\circ R(\eta_x) \ge f \circ R(0_x) + \langle v, \eta_x \rangle + o(\|\eta_x\|), \quad \forall \eta_x \in \T_x{\cal M}.
\end{eqnarray}
\end{proposition}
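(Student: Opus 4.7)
The plan is to prove both implications by systematically transferring the problem to the tangent space via the retraction, where the result reduces to the classical Euclidean variational description (Proposition \ref{prop2.2}). The key observation is that $R_x$ is a local diffeomorphism from a neighborhood of $0_x$ in $T_x\mathcal{M}$ onto a neighborhood of $x$ in $\mathcal{M}$, and that $dR_x(0_x) = \mathrm{id}_{T_x\mathcal{M}}$ makes first-order expansions on $\mathcal{M}$ and on $T_x\mathcal{M}$ agree.

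For the forward direction ($\Rightarrow$), suppose $v = \operatorname{grad} h(x)$ with $h \in C^1(B_{x,\delta})$ and $f - h$ attaining a local minimum at $x$ on $\mathcal{M}$. For $\eta_x \in T_x\mathcal{M}$ sufficiently small, $R_x(\eta_x) \in \mathcal{M} \cap B_{x,\delta}$, so $f(R_x(\eta_x)) - h(R_x(\eta_x)) \ge f(x) - h(x)$. Apply the chain rule and the defining property $dR_x(0_x) = \mathrm{id}$ to get $h(R_x(\eta_x)) = h(x) + \langle \nabla h(x), \eta_x\rangle + o(\|\eta_x\|) = h(x) + \langle \operatorname{grad} h(x), \eta_x\rangle + o(\|\eta_x\|)$, where the last equality uses $\eta_x \in T_x\mathcal{M}$ and self-adjointness of the projection. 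Rearranging yields \eqref{pullback}.

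For the reverse direction ($\Leftarrow$), set $\tilde f := f \circ R_x$, viewed as a function on the Euclidean space $T_x\mathcal{M}$ near $0_x$. The hypothesis \eqref{pullback} is exactly the statement that $v$ is a regular subgradient of $\tilde f$ at $0_x$ in the classical Euclidean sense. By Proposition \ref{prop2.2}, there is a smooth function $\tilde h$ on a neighborhood $V \subseteq T_x\mathcal{M}$ of $0_x$ with $\tilde h \le \tilde f$ on $V$, $\tilde h(0_x) = \tilde f(0_x) = f(x)$, and $\nabla \tilde h(0_x) = v$. To obtain a $C^1$ function on a Euclidean ball around $x$, I would use the tubular neighborhood of the embedded submanifold $\mathcal{M}$: let $\pi$ be the smooth nearest-point projection from a Euclidean neighborhood $U$ of $x$ onto $\mathcal{M}$, shrink $U$ so that $R_x^{-1} \circ \pi$ maps into $V$, and define $h(y) := \tilde h\bigl(R_x^{-1}(\pi(y))\bigr)$ on $U$. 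Then $h \in C^1(U)$, and for $y \in \mathcal{M} \cap U$ we have $\pi(y) = y$, so $h(y) = \tilde h(R_x^{-1}(y)) \le \tilde f(R_x^{-1}(y)) = f(y)$, with equality at $y = x$; hence $f - h$ attains a local minimum at $x$ on $\mathcal{M}$. Finally, compute $\operatorname{grad} h(x)$ by restricting to $\mathcal{M}$ and applying the chain rule on the manifold: since $R_x^{-1}$ has differential $\mathrm{id}_{T_x\mathcal{M}}$ at $x$, $\operatorname{grad}(h|_{\mathcal{M}})(x) = \operatorname{grad}\tilde h(0_x) = v$.

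The main obstacle is the construction of the Euclidean extension $h$ in the reverse direction: one must produce a $C^1$ function on a full neighborhood of $x$ in $\mathbb{R}^n$ whose restriction to $\mathcal{M}$ still lies below $f$ and whose Riemannian gradient at $x$ equals the prescribed tangent vector $v$. The tubular neighborhood projection gives a clean solution, but one must verify (i) smoothness of $R_x^{-1} \circ \pi$ on a sufficiently small Euclidean ball, and (ii) that the Riemannian gradient is computed correctly — this relies crucially on the fact that the Riemannian gradient depends only on $h|_{\mathcal{M}}$, bypassing the awkward fact that a naive linear extension $h(y) = f(x) + \langle v, y - x\rangle + \text{remainder}$ would not automatically satisfy $h \le f$ on $\mathcal{M}$ because $y - x$ and $R_x^{-1}(y)$ differ at second order.
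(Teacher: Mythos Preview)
Your proposal is correct and follows essentially the same approach as the paper: both reduce the statement to the Euclidean variational description of regular subgradients (Proposition~\ref{prop2.2}) by pulling back along the retraction $R_x$, using that $R_x$ is a local diffeomorphism with $dR_x(0_x)=\mathrm{id}_{T_x\mathcal{M}}$. The paper's proof is terser and phrases everything as a chain of equivalences, implicitly relying on the Whitney extension theorem (invoked in the paragraph preceding the proposition) to pass from a smooth function on $\mathcal{M}$ near $x$ back to a $C^1$ function on a Euclidean ball; your tubular-neighborhood construction for the reverse direction is a more explicit alternative to that step, but the overall strategy is the same.
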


\begin{proof}
By Definition \ref{mydef}, $v\in \hat \partial_{\cal R} f(x)$ if and only if there exists $h\in C^1(B_{x,\delta})$ for some $\delta>0$ such that $f-h$ attains local minimum at $x$ on $\cal M$, and $\operatorname{grad} h(x) = v$. The latter statement is equivalent to the fact that  $f\circ R_x - h\circ R_x$ obtains local minimum at $0_x$
in $\T_x {\cal M}$. By Definition \ref{retraction} and the fact that $\cal M$ is endowed with a Riemannian metric, we have
\[
 \operatorname{grad}  (h\circ R_x)(0_x) = \operatorname{grad}  h(x) = v.
\]
This implies that $v\in \hat \partial (f\circ R_x)(0_x)$, when considering $\T_x {\cal M}$ is an Euclidean space itself. By Definition \ref{subdif-Rn}, we know that $v\in \T_x {\cal M}$ satisfies \eqref{pullback}. 
\end{proof}

According to Proposition \ref{eqdif}, we easily find that if $\bar x$ is a local minimizer of $f$ on ${\cal M}$, then $0 \in \hat \partial_{\cal R} f(\bar x)$.

\begin{definition}\label{stationary-point}
A point $x\in {\cal M}$ is called a limiting stationary point of the Rimannian optimization problem \eqref{model}, if $0\in \partial_{\cal R} f(x)$.
\end{definition}

The algorithm proposed in this paper is related to the smoothing function $\tilde f$ that is employed. It is natural that the convergence result also relates to $\tilde f$. We give the following definition for Riemannian subdifferential of $f$ associated with $\tilde f$ at $x\in \cal M$.

\begin{definition}
The subdifferential of $f$ associated with $\tilde f$ at $x\in \mathbb{R}^n$ is
\begin{eqnarray}\label{Gsf}
 G_{{\tilde f}}(x) =  
\{u\in \mathbb{R}^n\ : \ \nabla_x  \tilde f(z_k,\mu_k) \to u\quad {\rm for\ some}\ z_k \to x,\ \mu_k \downarrow 0\},
\end{eqnarray}
and the Riemannian subdifferential of $f$ associated with $\tilde f$ at $x\in \cal M$ is
\begin{eqnarray}\label{RGsf}
\quad \quad G_{{\tilde f}, {\cal R}}(x) = 
\{v \in \mathbb{R}^n &:& \operatorname{grad}  \tilde f(z_k,\mu_k) \to v 
\ {\rm for\ some}
\ z_k\in {\cal M},\ z_k \to x,\ \mu_k \downarrow 0 
\}.
\end{eqnarray}
\end{definition}

\begin{remark}
Here $u\in G_{\tilde f}(x)$  and $v\in G_{{\tilde f}, {\cal R}}(x)$ are vectors in $\mathbb{R}^n$ whose {entries are finite, i.e., they are not $\infty$ or $-\infty$.}
\end{remark}

\begin{example}
\label{example-3.1}
For the smoothing function  ${\tilde f}_{\mu}(t) = (s_{\mu}(t))^p$ of $f(t) = |t|^p$ with $0<p<1$, where $s_{\mu}(t)$ is the uniform smoothing function of $|t|$ defined in \eqref{uniform-sf}, we have
\begin{eqnarray*}
s_{\mu}'(t) = \left\{\begin{array}{ll}
                       {\rm sign}(t) & {\rm if}\ |t|\ge \frac{\mu}{2}\\
                       \frac{2t}{\mu} & {\rm if}\ |t|<\frac{\mu}{2}
                     \end{array}
              \right. 
\quad \mbox{ and } \quad
[(s_{\mu}(t))^p]' = p(s_{\mu}(t))^{p-1} s_{\mu}'(t).
\end{eqnarray*}
Here ${\rm sign}(t)=1$ if $t>0$, ${\rm sign}(t)=-1$ if $t<0$, and ${\rm sign}(t)=0$ otherwise. 
For an arbitrary real number $v\in \mathbb{R}$, and an arbitrarily chosen sequence $\mu_k \downarrow 0$,
let  $t_k = a {\mu_k}^{2-p}$ with $a = \frac{4^{p-1}v}{2p}$.
It is easy to see that
\[
\lim_{\mu_k \downarrow 0} [(s_{\mu_k}(t_k))^p]' = 2p4^{1-p} a = v.
\]
Hence $G_{\tilde f}(0) = (-\infty,\infty)$. For any point $t\ne 0$, we know that
$G_{\tilde f}(t) = p |t|^{p-1} {\rm sign}(t)$.
\end{example}

\begin{definition}
We say that $x^*\in {\cal M}$ is a stationary point of $f$ associated with $\tilde f$ on the submanifold $\cal M$,
 if
\begin{eqnarray}
\liminf_{x \to x^*,\ x\in {\cal M},\ \mu \downarrow 0} \| \operatorname{grad}  \tilde f (x,\mu)\| = 0.
\end{eqnarray}
\end{definition}

The following result is an extension of Proposition 3.4 of \cite{Zhang2} from $\mathbb{R}^n$ to the
submanifold $\cal M$.

\begin{proposition} 
For any smoothing function $\tilde f$ of $f$ as defined in Definition \ref{sf},
if $x^* \in {\cal M}$ is a local minimizer of $f$ on the submanifold $\cal M$, then
$x^*$ is a stationary point of $f$ associated with $\tilde f$ on the submanifold $\cal M$.
\end{proposition}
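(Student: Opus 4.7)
The plan is to exhibit an explicit sequence $(x_\mu,\mu)$ with $x_\mu\to x^*$ in $\cal M$ as $\mu\downarrow 0$ along which $\|\operatorname{grad}\tilde f(x_\mu,\mu)\|\to 0$; this forces the $\liminf$ in the stationarity definition to be $0$. The trick I would use is to regularize the smoothing function by a quadratic centered at $x^*$, so that its localized minimizers are pinned to $x^*$ in the limit while the added term contributes a tangential component that is easily bounded by $\|x_\mu-x^*\|$.

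First I fix $\delta>0$ small enough that $f(x)\ge f(x^*)$ for all $x\in \bar B_{x^*,\delta}\cap{\cal M}$, which is possible because $x^*$ is a local minimizer of $f$ on $\cal M$. Since $\cal M$ is a complete (hence closed) submanifold of $\mathbb{R}^n$, the intersection $K:=\bar B_{x^*,\delta}\cap{\cal M}$ is compact. For each $\mu>0$, I define
\[
 g_\mu(x):=\tilde f(x,\mu)+\|x-x^*\|^2,
\]
which is continuous on $K$ (in fact $C^1$), and let $x_\mu\in K$ be a minimizer of $g_\mu$ over $K$.

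To show $x_\mu\to x^*$, I compare $g_\mu(x_\mu)\le g_\mu(x^*)=\tilde f(x^*,\mu)$, which rearranges to $\|x_\mu-x^*\|^2\le \tilde f(x^*,\mu)-\tilde f(x_\mu,\mu)$. Two applications of the smoothing estimate \eqref{sfcond2}, combined with $f(x_\mu)\ge f(x^*)$ (valid because $x_\mu\in K$), give $\tilde f(x^*,\mu)-\tilde f(x_\mu,\mu)\le 2\kappa\omega(\mu)$. Since $\omega(\mu)\downarrow 0$, this forces $x_\mu\to x^*$. For all sufficiently small $\mu$, then, $x_\mu$ lies in the interior of the ball defining $K$, so $x_\mu$ is an unconstrained local minimizer of $g_\mu$ on $\cal M$ and satisfies the first-order condition $\operatorname{grad} g_\mu(x_\mu)=0$. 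Writing $\operatorname{grad}$ as the projection of the Euclidean gradient onto $\T_{x_\mu}{\cal M}$ and using linearity, I get
\[
 \operatorname{grad}\tilde f(x_\mu,\mu)=-2\operatorname{Proj}_{\T_{x_\mu}{\cal M}}(x_\mu-x^*),
\]
and hence $\|\operatorname{grad}\tilde f(x_\mu,\mu)\|\le 2\|x_\mu-x^*\|\to 0$, as desired.

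The main obstacle I anticipate is the location control of the minimizers. A naive argument that minimizes $\tilde f(\cdot,\mu)$ itself over $K$ only yields, via the smoothing bound and lower semicontinuity of $f$, accumulation points $\bar x\in K$ with $f(\bar x)=f(x^*)$; such $\bar x$ could differ from $x^*$ or even sit on the boundary $\partial K$, and at boundary points the first-order condition is lost. The quadratic regularizer circumvents both issues simultaneously: it upgrades the comparison with $g_\mu(x^*)$ into a direct bound on $\|x_\mu-x^*\|^2$, and, being smooth with a gradient vanishing at $x^*$, contributes only an $O(\|x_\mu-x^*\|)$ perturbation to $\operatorname{grad}\tilde f(x_\mu,\mu)$.
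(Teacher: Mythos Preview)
Your argument is correct and takes a genuinely different route from the paper's proof. The paper pulls the problem back to the tangent space $\T_{x^*}\mathcal{M}$ via a retraction $R_{x^*}$, establishes $\hat f_\mu(0_{x^*})\le \hat f_\mu(\eta)+2\kappa\omega(\mu)$ for nearby $\eta$, then Taylor-expands $\hat f_\mu$ at points $\eta_\mu=\sqrt{\omega(\mu)}\,\eta_z$ along each fixed direction $\eta_z$, divides by $\sqrt{\omega(\mu)}$, and finally tests against an orthonormal basis (and its negatives) of $\T_{x^*}\mathcal{M}$ to force each component of the Riemannian gradient to vanish in the limit. Your approach instead runs a smooth variational principle directly on $\mathcal{M}$: you perturb $\tilde f(\cdot,\mu)$ by $\|x-x^*\|^2$, minimize over a small compact neighborhood, and read off the first-order condition. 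This avoids the retraction, the Taylor expansion, and the basis argument altogether, and as a bonus yields the quantitative bound $\|\operatorname{grad}\tilde f(x_\mu,\mu)\|\le 2\sqrt{2\kappa\,\omega(\mu)}$, which the paper's argument does not produce. The paper's route, on the other hand, stays entirely within the Riemannian-optimization toolkit (retractions, pullbacks), which is consistent with how the rest of the paper is written.

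One small point: your justification ``complete (hence closed)'' for compactness of $K=\bar B_{x^*,\delta}\cap\mathcal{M}$ is not the most robust phrasing. What you actually need, and what holds for any embedded submanifold regardless of completeness, is that for $\delta$ small enough $\mathcal{M}\cap \bar B_{x^*,\delta}$ is compact; this follows from the local slice characterization of embedded submanifolds. Since you are already free to shrink $\delta$ to secure the local-minimum inequality $f\ge f(x^*)$, shrinking further to obtain compactness costs nothing.
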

\begin{proof}
Note that $x^* \in {\cal M}$ is a local minimizer of $f$ on the submanifold $\cal M$.
Since minima are preserved by composition with diffeomorphisms (see, e.g., the proof of (2) $\Rightarrow$ (1) in Proposition 2.2 of \cite{Azagra}), we then know that $0_{x^*}$ is a local minimizer of $\hat f = f \circ R_{x^*}$ on the tangent space
$\T_{x^*}{\cal M}$. Hence there exists a neighborhood $B_{0_{x^*},\delta}$ of $0_{x^*}$ such that for any $\eta \in \T_{x^*} {\cal M}\cap B_{0_{x^*},\delta}$, it holds that $\hat f(0_{x^*}) \le \hat f(\eta)$.

Let us denote $\hat{f}_{\mu} = \tilde f_{\mu} \circ R_{x^*}$ for any fixed $\mu >0$. We have
\begin{eqnarray*}
 {\hat f}_{\mu}(0_{x^*}) = \tilde f(x^*,\mu) &\le& f(x^*) + \kappa \omega(\mu)\\
                                      & =& \hat f(0_{x^*}) + \kappa \omega(\mu)\\
                                      &\le &  \hat f (\eta) + \kappa \omega(\mu)\quad {\rm for\ any}\ \eta \in
                                                                           B_{0_{x^*},\delta}\\
                                      &= &  f(x) + \kappa \omega(\mu)\quad {\rm for}\ x = {R_{x^*}(\eta)}\\
                                      &\le &  \tilde f(x,\mu) + 2 \kappa \omega(\mu)\\
                                      &=& \hat f_{\mu}(\eta) + 2 \kappa \omega(\mu).
\end{eqnarray*}
Thus,
\begin{eqnarray}\label{ineq1}
\hat{f}_{\mu}(0_{x^*}) \le \hat{f}_{\mu}(\eta) + 2 \kappa \omega(\mu),\quad {\rm for \ any}\ \eta \in B_{0_{x^*},\delta}.
\end{eqnarray}
For any $\eta_z \in \T_{x^*} {\cal M} \cap B_{0_{x^*},\delta}$, we define $\eta_{\mu} = 0_{x^*} + \sqrt{\omega(\mu)}\eta_z \in \T_{x^*} {\cal M} \cap B_{0_{x^*},\delta}$  for all $\mu$ sufficiently small, and $\eta_{\mu} \to 0_{x^*}$ as $\mu \downarrow 0$. Since $\hat f_{\mu}$ is continuously differentiable on $\T_{x^*}{\cal M}$, by Taylor's expansion we have
\begin{eqnarray}\label{eq1}
 \hat f_{\mu}(0_{x^*}) 
 = \hat f_{\mu}(\eta_{\mu}) + \langle  \operatorname{grad}  \hat f_{\mu}(\eta_{\mu}),-\sqrt{\omega(\mu)} \eta_z \rangle +o(\sqrt{\omega(\mu)}\| \eta_{z}\|).
\end{eqnarray} 
Substituting \eqref{eq1} into the left hand side of \eqref{ineq1}, and replacing $\eta$ by $\eta_{\mu}$ with $\mu$ that is sufficiently small, we get
\[
\sqrt{\omega(\mu)} \langle  \operatorname{grad}   \hat f_{\mu}(\eta_{\mu}),-\eta_z \rangle + o(\sqrt{\omega(\mu)}\|\eta_z\|) \le 2 \kappa \omega(\mu).
\]
Dividing both sides of the above inequality by $\sqrt{\omega(\mu)}$, and taking the limit as $\mu \downarrow 0$, we get 
\[
  \limsup_{\mu \downarrow 0} \langle  \operatorname{grad}   \hat f_{\mu}(\eta_{\mu}),-\eta_z \rangle  \le 0,
\]
which implies that
\begin{eqnarray}\label{main-inequality}
\liminf_{\eta \to 0_{x^*},\ \eta\in \T_{x^*}{\cal M}, \ \mu \downarrow 0} \langle  \operatorname{grad}   \hat f_{\mu}(\eta),-\eta_z \rangle  \le 0.
\end{eqnarray}

Note that $\eta_z \in \T_{x^*}{\cal M} \cap B_{0_{x^*},\delta}$ can be chosen arbitrarily. Let $\cal M$ be a $d$-dimensional submanifold. We can choose $E: \mathbb{R}^n \to \T_{x^*}{\cal M}$ to be a linear bijection such that $\{E(e_i)\}_{i=1}^d$ is an orthonormal basis of $\T_{x^*} {\cal M}$, where $e_i$ is the $i$-th unit vector (see, e.g., Section 2 of \cite{Yang}). Then
  \begin{eqnarray}\label{gradfhat}
  \operatorname {grad}  {\hat f}_{\mu}(\eta) = \sum_{i=1}^d \lambda_i^{\mu} E(e_i),
  \end{eqnarray}
  for some $\lambda_i^{\mu} \in \mathbb{R}$. Let us choose
  \begin{eqnarray*}
  \eta_z^{(i,1)} = \epsilon_i E(e_i),\ \eta_z^{(i,2)} = -\epsilon_i E(e_i),\quad \mbox{for}\ i=1,2,\ldots,d,
  \end{eqnarray*}
  where $\epsilon_i >0$ is a sufficiently small  constant such that $\eta_z^{(i,1)}, \eta_z^{(i,2)} \in B_{0_{x^*},\delta}. $
  Substituting ${\rm grad} \hat f_{\mu}(\eta)$ in \eqref{main-inequality} by \eqref{gradfhat}, and substituting
   $\eta_z$  in \eqref{main-inequality} by $\eta_z^{(i,1)}$ and $\eta_z^{(i,2)}$, respectively, we obtain
\[
   \liminf_{\mu \downarrow 0} -\epsilon_i \lambda_i^{\mu} \ge 0,\quad \mbox{and}\quad
   \liminf_{\mu\downarrow 0} \epsilon_i \lambda_i^{\mu} \ge 0.
\]
The above two inequalities indicate
\[
     \lim_{\mu \downarrow 0} \lambda_i^{\mu} =0.
\]
Since $i=1,2,\ldots,d$ can be chosen arbitrarily, the above equality holds for each $i$. Hence, we get
\[
   \liminf_{\eta\to 0_{x^*},\ \eta\in T_{x^*}{\cal M},\ \mu \downarrow 0} \| \operatorname{grad}   \hat f_{\mu}(\eta)\|
  = \lim_{\mu \downarrow 0} \|\sum_{i=1}^d \lambda_i^{\mu} E(e_i)\| =0.
\]
That is,
\[
  \liminf_{x\to x^*,\ x\in {\cal M},\ \mu \downarrow 0}  \| \operatorname{ grad} \tilde f(x,\mu)\| =0,
\]
and hence $x^*$ is a stationary point of $f$ associated with $\tilde f$ on $\cal M$ as desired.
\end{proof}

We will show later that any accumulation point of our proposed RSSD method is a stationary point of $f$ associated with $\tilde f$ on $\cal M$.

\subsection{Riemannian gradient sub-consistency}

Now we define the Riemannian gradient sub-consistency of $\tilde f$ at $x\in \cal M$, which makes connection between the Riemannian subdifferential $G_{\tilde f, {\cal R}}(x)$ associated with $\tilde f$ and the Riemannian limiting subdifferential $\partial_{\cal R} f(x)$. The Riemannian gradient sub-consistency will be essential to show that any accumulation point of the RSSD method developed in this paper is also a Riemannian limiting stationary point.

\begin{definition}
\label{weakgradient}
A smoothing function $\tilde f$ of the function $f$ is said to satisfy the  gradient sub-consistency at $x\in {\mathbb{R}^n}$ if
\begin{eqnarray}\label{weak1}
G_{\tilde f}(x) \subseteq \partial f(x),
\end{eqnarray}
and $\tilde f$ is said to satisfy the Riemannian gradient sub-consistency at $x\in {\cal M}$ if
\begin{eqnarray}\label{Rweakgradient}
G_{\tilde f, {\cal R}}(x) \subseteq \partial_{\cal R} f(x).
\end{eqnarray}
We say that $\tilde f$ satisfies the gradient sub-consistency on $\mathbb{R}^n$ if \eqref{weak1} holds for any $x\in \mathbb{R}^n$, and that $\tilde f$ satisfies the Riemannian gradient sub-consistency on $\cal M$ if \eqref{Rweakgradient} holds for any $x\in {\cal M}$.
\end{definition}

If the inclusion is substituted by the equality in \eqref{weak1} for any $x\in \mathbb{R}^n$, then $\tilde f$ satisfies the gradient consistency on $\mathbb{R}^n$. Clearly, the gradient consistency indicates the gradient sub-consistency. The gradient consistency on $\mathbb{R}^n$ has been well studied in smoothing methods for nonsmooth optimization. For nonsmooth but Lipschitz function $f$, it has been shown that the gradient consistency property holds for various smoothing functions in many real applications \cite{Burke,Burke2,Chen,XuYe2015,Zhang}.   

The following proposition demonstrates that if the gradient sub-consistency of $\tilde f$ in $\mathbb{R}^n$ holds, then the Riemannian gradient sub-consistency of $\tilde f$ holds on $\cal M$, provided that $f$ is locally Lipschitz.

\begin{proposition}\label{propostion-gradient-consistency}
Let $f$ be a locally Lipschitz function with $\tilde f$ being a smoothing function of $f$. If the gradient sub-consistency of $\tilde f$ holds on $\mathbb{R}^n$, then the Riemannian gradient sub-consistency on $\cal M$ holds. 
\end{proposition}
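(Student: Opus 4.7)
The plan is to transfer an element of $G_{\tilde f, \cal R}(x)$ back to an Euclidean gradient limit, apply the hypothesized gradient sub-consistency on $\mathbb{R}^n$, and then use the smooth structure of $\cal M$ to push the tangential projection of the resulting limiting subgradient into $\partial_{\cal R} f(x)$.

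Starting from $v \in G_{\tilde f, \cal R}(x)$, witnessed by sequences $z_k \in {\cal M}$, $z_k \to x$, $\mu_k \downarrow 0$ with $\operatorname{grad} \tilde f(z_k, \mu_k) = \operatorname{Proj}_{\T_{z_k} {\cal M}} \nabla \tilde f(z_k, \mu_k) \to v$, I would first argue that $\{\nabla \tilde f(z_k, \mu_k)\}$ is bounded. The local Lipschitz constant $L$ of $f$ near $x$, together with the uniform approximation $|\tilde f(\cdot, \mu) - f(\cdot)| \le \kappa \omega(\mu)$ from \eqref{sfcond2}, forces $\limsup_{k} \|\nabla \tilde f(z_k, \mu_k)\| \le L$ via a directional-derivative estimate on small balls (if $\nabla \tilde f(z_k,\mu_k)$ had a subsequential direction $d$ with norm exceeding $L$, a first-order expansion along $\pm d$ contradicts the uniform Lipschitz-plus-error bound on $\tilde f(\cdot,\mu_k)$). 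Passing to a subsequence yields $\nabla \tilde f(z_k, \mu_k) \to u$, so $u \in G_{\tilde f}(x)$, and the hypothesis gives $u \in \partial f(x)$. Continuity of $y \mapsto \operatorname{Proj}_{\T_y {\cal M}}$ on the smooth submanifold then supplies $v = \operatorname{Proj}_{\T_x {\cal M}} u$.

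It remains to show that $\operatorname{Proj}_{\T_x {\cal M}} u \in \partial_{\cal R} f(x)$ whenever $u \in \partial f(x)$. Using Definition \ref{subdif-Rn}(ii), write $u = \lim_\ell u_\ell$ with $u_\ell \in \hat \partial f(y_\ell)$, $y_\ell \to x$, $f(y_\ell) \to f(x)$. Set $w_\ell = P_{\cal M}(y_\ell)$; for large $\ell$ this projection is well-defined and $w_\ell \to x$, while local Lipschitz continuity yields $f(w_\ell) \to f(x)$. Invoking Proposition \ref{prop2.2}, pick smooth $h_\ell \le f$ near $y_\ell$ with $h_\ell(y_\ell) = f(y_\ell)$ and $\nabla h_\ell(y_\ell) = u_\ell$, then shift by a small negative quadratic centered at $w_\ell$ to obtain $\tilde h_\ell$ satisfying $\tilde h_\ell(w_\ell) = f(w_\ell)$ and $\tilde h_\ell \le f$ on a neighborhood of $w_\ell$ on $\cal M$ (the inequality needed for \eqref{hat_partial_R}), while $\nabla \tilde h_\ell(w_\ell) - u_\ell \to 0$. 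Then \eqref{hat_partial_R_2} gives $\operatorname{Proj}_{\T_{w_\ell} {\cal M}} \nabla \tilde h_\ell(w_\ell) \in \hat \partial_{\cal R} f(w_\ell)$, and continuity of the projection along $w_\ell \to x$ delivers $\operatorname{Proj}_{\T_x {\cal M}} u \in \partial_{\cal R} f(x)$.

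The main obstacle is this last transfer, moving a Fr\'echet subgradient of $f$ at an \emph{ambient} point $y_\ell \in \mathbb{R}^n$ to a Riemannian Fr\'echet subgradient at the nearby manifold point $w_\ell$. The local Lipschitz hypothesis on $f$ is essential here: it bounds $|f(w_\ell) - f(y_\ell)|$ by $L \|w_\ell - y_\ell\|$ and thus controls the size of the quadratic correction required to simultaneously restore the contact condition $\tilde h_\ell(w_\ell) = f(w_\ell)$ and the supporting inequality $\tilde h_\ell \le f$ on a neighborhood of $w_\ell$ in $\cal M$, while keeping $\nabla \tilde h_\ell(w_\ell) \to u$. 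Without this hypothesis the horizontal subdifferential of $f$ can be nontrivial and the tangential projection of $\partial f(x)$ need not land in $\partial_{\cal R} f(x)$, which is exactly why the proposition presumes $f$ is locally Lipschitz.
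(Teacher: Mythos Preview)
Your overall strategy matches the paper's: extract a convergent subsequence of the Euclidean gradients $\nabla\tilde f(z_k,\mu_k)$ to obtain $u\in G_{\tilde f}(x)\subseteq\partial f(x)$, identify $v=\operatorname{Proj}_{\T_x\cal M}u$ via continuity of the tangent projection, and then unwind the definition of $\partial f(x)$ and apply \eqref{hat_partial_R_2} to land in $\partial_{\cal R}f(x)$. The paper obtains boundedness of the Euclidean gradients simply by citing that $G_{\tilde f}(x)\subseteq\partial f(x)$ is bounded for locally Lipschitz $f$ and passing to a subsequence; your directional-derivative contradiction is looser and, as stated, does not close (the additive error $2\kappa\omega(\mu_k)$ cannot be killed by sending the step $t\to 0$ for fixed $k$).

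Where you depart from the paper is in the last step. The paper takes the approximating points from the definition of $\partial f(x)$ and applies \eqref{hat_partial_R_2} directly at those points, without commenting on whether they lie on $\cal M$. You flag this and attempt a repair via $w_\ell=P_{\cal M}(y_\ell)$ together with a quadratic correction of $h_\ell$. The gap is your claim that $\nabla\tilde h_\ell(w_\ell)-u_\ell\to 0$: since the quadratic is centered at $w_\ell$, its gradient vanishes there, so $\nabla\tilde h_\ell(w_\ell)=\nabla h_\ell(w_\ell)$ and you need $\nabla h_\ell(w_\ell)-\nabla h_\ell(y_\ell)\to 0$. Proposition~\ref{prop2.2} gives no control on $\nabla h_\ell$ away from $y_\ell$; the smooth minorants $h_\ell$ can have arbitrarily large gradient variation between $y_\ell$ and $w_\ell$, even though $\|y_\ell-w_\ell\|\to 0$. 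The local Lipschitz bound on $f$ governs $|f(w_\ell)-f(y_\ell)|$ (hence the additive constant needed for contact) and can help force a local minimum, but it says nothing about the second-order behavior of the particular $h_\ell$ you chose. To salvage this route you would need either a quantitative form of Proposition~\ref{prop2.2} with uniform $C^1$ control on the $h_\ell$, which is not available in general, or a different transfer mechanism from $\hat\partial f(y_\ell)$ to $\hat\partial_{\cal R}f(w_\ell)$.
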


\begin{proof} 
For any $x\in {\cal M}$, let $v\in G_{{\tilde f},{\cal R}}(x)$. Note that $G_{\tilde f}(x) \subseteq \partial f(x)$ is bounded if $f$ is a locally Lipschitz function. Then there exist subsequences $x_{\mu_k} \in {\cal M}$, $x_{\mu_k}\to x$, $\mu_k \downarrow 0$ as $k\to \infty$, and a vector $u\in G_{\tilde f}(x)$ such that
\begin{eqnarray}\label{limit-1}
u = \lim_{x_{\mu_k} \to x, \ x_{\mu_k}\in {\cal M},\ \mu_k \downarrow 0} \nabla_x \tilde f(x_{\mu_k},\mu_k),
\end{eqnarray}
and
\begin{eqnarray}\label{limit-2}
v & = & \lim_{x_{\mu_k} \to x,\ x_{\mu_k}\in {\cal M},\ \mu_k \downarrow 0} \operatorname{grad} \tilde f(x_{\mu_k}, \mu_k) \nonumber\\
&=& \lim_{x_{\mu_k} \to x,\ x_{\mu_k}\in {\cal M},\ \mu_k \downarrow 0} \operatorname{Proj}_{\T_{x_{\mu_k}}{\cal M}} \nabla_x \tilde f(x_{\mu_k},\mu_k),
\nonumber\\
&=& \operatorname{Proj}_{\T_x{\cal M}} u. 
\end{eqnarray}
The last equality holds because
\begin{align*}
 & \|\operatorname{Proj}_{\T_{x_{\mu_k}}{\cal M} } \nabla_x \tilde f(x_{\mu_k},\mu_k) - \operatorname{Proj}_{\T_x{\cal M}} u\|  \\
  \le &\  \|\operatorname{Proj}_{\T_{x_{\mu_k}}{\cal M} } \nabla_x \tilde f(x_{\mu_k},\mu_k) - \operatorname{Proj}_{\T_{x_{\mu_k}}{\cal M}} u \| + \|\operatorname{Proj}_{\T_{x_{\mu_k}}{\cal M}}u - \operatorname{Proj}_{\T_{x}{\cal M}}u \| \\
   \le &\ \|\nabla_x \tilde f(x_{\mu_k},\mu_k) - u\| + \|\operatorname{Proj}_{\T_{x_{\mu_k}}{\cal M}}u - \operatorname{Proj}_{\T_{x}{\cal M}}u \|  \\
 \to &\  0, 
\end{align*}
as\  $x_{\mu_k} \to x,\ x_{\mu_k}\in {\cal M},\ \mu_k \downarrow 0.$
Here the second inequality comes from the fact that $\operatorname{Proj}_{\T_{x_{\mu_k}}{\cal M}}$ is nonexpansive. Moreover, $\|\nabla_x \tilde f(x_{\mu_k},\mu_k) - u\|\to 0$ by \eqref{limit-1}, and $\|\operatorname{Proj}_{\T_{x_{\mu_k}}{\cal M}}u - \operatorname{Proj}_{\T_{x}{\cal M}}u \|\to 0$ because  
$S(x) := \T_x \cal M$ is continuous 
 and convex-valued (i.e.,  $S(x)$ is a convex set for each fixed $x$), and ${\operatorname{Proj}_{S(x)}}$ is continuous according to Example 5.57 of \cite{Rockafellar}.

Since the gradient sub-consistency $ G_{\tilde f} \subseteq \partial f(x)$ holds, we  know that $u\in \partial f(x)$. By the definition of the limiting subdifferential of $f$ on $\mathbb{R}^n$,
\[
\exists \ u_{\ell} \in \hat \partial f(x_{\ell}),\ (x_{\ell}, f(x_{\ell})) \to (x,f(x))\ \mbox{such that}\ \lim_{\ell \to \infty}  u_{\ell} = u.
\]
By the characterization of the Riemannian Fr\'{e}chet subdifferential 
in \eqref{hat_partial_R_2}, we have
\[
v_{\ell}= \operatorname{Proj}_{\T_{x_{\ell}}{\cal M}}u_{\ell}  \in \hat \partial_{\cal R} f(x_{\ell}),
\]
and using the same arguments of proving \eqref{limit-2}, we have  
\begin{eqnarray*}
\lim_{\ell \to \infty} v_{\ell} = \lim_{\ell \to \infty} \operatorname{Proj}_{\T_{x_{\ell}}{\cal M}}  u_{\ell}  
= \operatorname{Proj}_{\T_x{\cal M}}  u = v.
\end{eqnarray*}
This implies $v\in \partial_{\cal R} f(x)$,  and hence the Riemannian gradient sub-consistency holds.
\end{proof}

 For non-Lipschitz functions, we first use the smoothing function $s_{\mu}(t)$  of $|t|^p$ to illustrate that the gradient consistency on $\mathbb{R}^n$ holds. 
It is known that   $\partial f(0)=  (-\infty,\infty)$, and  $\partial f(t) = p |t|^{p-1} {\rm sign}(t)$.  This, combined with Example \ref{example-3.1}, yields that
$$ G_{\tilde f}(0) = (-\infty,\infty) = \partial f(0),$$
and
for any point $t\ne 0$, 
$$G_{\tilde f}(t) = p |t|^{p-1} {\rm sign}(t) = \partial f(t).$$
Thus the smoothing function $\tilde f$ of the non-Lipschitz function $f = |t|^p$ satisfies the gradient consistency  on $\mathbb{R}^n$.

Furthermore, we consider a class of non-Lipschitz optimization on submanifold $\cal M$ as follows
\begin{eqnarray}\label{non-Lipschitz-example}
\min_{x\in {\cal M}} \  f(x):=  \hat f(x) +  \lambda \|Bx\|_p^p,
\end{eqnarray}
where $\hat f$ is a smooth function, $\cal M$ is a submanifold, $B\in \mathbb{R}^{m\times n}$ is a given matrix of full column rank,  and  $p\in (0,1)$, and $\lambda>0$ are given constants. Many applications can be formulated in the form of \eqref{non-Lipschitz-example}, such as finding the sparsest vector in a subspace, and the sparsely-used orthogonal complete dictionary learning that will be discussed later in Section \ref{sec:numerical}. Let $\tilde s_{\mu}(t)$ be a smoothing function of $|t|$ satisfying Definition \ref{sf}. Then the function 
\begin{eqnarray}\label{smoothing-example}
\tilde f(x,\mu) = \hat f(x) + \lambda\sum_{i=1}^m [\tilde s_{\mu}((Bx)_i) ]^p 
\end{eqnarray}
is a smoothing function of $f$ defined in \eqref{non-Lipschitz-example}. We then have the following proposition. 

\begin{proposition} 
\label{example-gradient-sub-consistency-lp} The smoothing function $\tilde f$ that is constructed  in \eqref{smoothing-example} for the non-Lipschitz objective function $f$  in \eqref{non-Lipschitz-example} satisfies  the gradient sub-consistency on $\mathbb{R}^n$, and  the Riemannian gradient sub-consistency on the submanifold $\cal M$. 
\end{proposition}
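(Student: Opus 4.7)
The plan is to exploit the additively separable structure of the smoothing: writing $b_i$ for the $i$-th row of $B$ viewed as a column vector and $\phi_{\mu}(t) := p[\tilde s_{\mu}(t)]^{p-1}\tilde s_{\mu}'(t)$, we have
\[
\nabla_x \tilde f(z,\mu) = \nabla \hat f(z) + \lambda \sum_{i=1}^m \phi_{\mu}((Bz)_i) b_i,
\]
so everything reduces to one-dimensional information about each $\phi_{\mu}((Bz)_i)$. The gradient consistency of the scalar smoothing $[\tilde s_{\mu}(\cdot)]^p$ for $|\cdot|^p$ on $\mathbb{R}$ has just been established in the preceding discussion, and $\hat f$ is smooth, so the only remaining task is to show the contributions combine correctly. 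Throughout I split the row indices into $J := \{i : (Bx)_i \neq 0\}$ and $I := \{i : (Bx)_i = 0\}$.

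For gradient sub-consistency on $\mathbb{R}^n$, let $v \in G_{\tilde f}(x)$ with $\nabla_x\tilde f(z^{(k)},\mu_k) \to v$. For each $i\in J$, continuity of the smoothing forces $\phi_{\mu_k}((Bz^{(k)})_i) \to w_i := p|(Bx)_i|^{p-1}\operatorname{sign}((Bx)_i)$, the unique subgradient of $|\cdot|^p$ at $(Bx)_i$. The remainder $\lambda\sum_{i\in I}\phi_{\mu_k}((Bz^{(k)})_i) b_i$ therefore converges to $u := v - \nabla\hat f(x) - \lambda\sum_{i\in J} w_i b_i$. Because this remainder lies in the fixed, finite-dimensional (hence closed) subspace $\operatorname{span}\{b_i : i \in I\}$, one can write $u = \lambda \sum_{i\in I} w_i b_i$ for some finite $w_i$, and any real $w_i$ is automatically in $\partial|\cdot|^p(0) = \mathbb{R}$. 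It then remains to verify $v = \nabla\hat f(x) + \lambda\sum_i w_i b_i \in \hat\partial f(x)$ by a direct Fr\'echet check: the smooth and $i \in J$ contributions are $o(\|y-x\|)$ by first-order expansion, while for $i \in I$ one shows $|\langle b_i, y-x\rangle|^p - w_i\langle b_i, y-x\rangle \geq 0$ once $\|y-x\|$ is small enough, using that $p<1$ makes $|t|^{p-1}$ dominate $|w_i|$ in the relevant regime.

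For the Riemannian statement, take $v \in G_{\tilde f, \cal R}(x)$ and apply the same decomposition to $\operatorname{grad}\tilde f(z,\mu) = \operatorname{Proj}_{\T_z{\cal M}} \nabla_x \tilde f(z,\mu)$. Continuity of the map $z \mapsto \operatorname{Proj}_{\T_z{\cal M}}$ (as used in the proof of Proposition~\ref{propostion-gradient-consistency}) handles the $\hat f$ part and the $i\in J$ terms. The goal is to produce $w_i \in \partial|\cdot|^p((Bx)_i)$ with $v = \operatorname{Proj}_{\T_x{\cal M}}[\nabla\hat f(x) + \lambda \sum_i w_i b_i]$; once this holds, part (i) gives $\nabla\hat f(x) + \lambda\sum_i w_i b_i \in \hat\partial f(x)$, and the inclusion \eqref{hat_partial_R_2} together with the trivial constant sequence $x_\ell \equiv x$ places $v$ in $\hat\partial_{\cal R} f(x) \subseteq \partial_{\cal R} f(x)$.

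The main obstacle lies in the $i\in I$ contribution to both assertions: the smoothing derivative $\phi_{\mu}(t)$ can blow up like $\mu^{p-1}$ near $t=0$, so the individual coefficients $\phi_{\mu_k}((Bz^{(k)})_i)$ need not be bounded along the sequence. In the Euclidean case, the closedness of $\operatorname{span}\{b_i : i\in I\}$ absorbs this and makes the finite representation of the limit automatic. In the Riemannian case, however, the analogous statement that a convergent sequence taken from $\operatorname{Proj}_{\T_{z^{(k)}}{\cal M}}\operatorname{span}\{b_i : i\in I\}$ has its limit in $\operatorname{Proj}_{\T_x{\cal M}}\operatorname{span}\{b_i : i\in I\}$ is delicate, since projected subspaces can in principle lose dimension in the limit. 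Resolving this by exploiting smoothness of the projection map together with the full column rank assumption on $B$ is where I would spend the bulk of the effort.
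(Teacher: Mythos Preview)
Your Euclidean argument is essentially the paper's: both split the row indices into $J=\{i:(Bx)_i\neq0\}$ and $I=\{i:(Bx)_i=0\}$, both exploit the closedness of $\operatorname{range}(B_I^\top)=\operatorname{span}\{b_i:i\in I\}$ to write the $I$-part of the limit with finite coefficients, and both finish by exhibiting a smooth minorant $h\le f$ with $h(x)=f(x)$ and $\nabla h(x)=v$ (the paper quotes Proposition~\ref{prop2.2}; your direct Fr\'echet check is the same content). The paper is slightly more explicit in that it decomposes the coefficient vector $Y_k$ into null-space and complementary parts and proves boundedness of the latter by contradiction, but this is just a constructive version of your ``the span is closed'' step.

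The Riemannian part has a genuine gap. You have correctly isolated the obstacle --- when the scalar coefficients $\phi_{\mu_k}((Bz^{(k)})_i)$, $i\in I$, blow up, the sequence $\operatorname{Proj}_{\T_{z^{(k)}}{\cal M}}\bigl(\lambda\sum_{i\in I}\phi_{\mu_k}(\cdot)\,b_i\bigr)$ can converge to a vector that is \emph{not} in $\operatorname{Proj}_{\T_x{\cal M}}\operatorname{span}\{b_i:i\in I\}$, because the tangent spaces rotate as $z^{(k)}\to x$ --- but the fix you suggest does not work. Full column rank of $B$ only says the \emph{full} row family spans $\mathbb{R}^n$; it gives no control over the proper subspace $\operatorname{span}\{b_i:i\in I\}$ or its position relative to $\T_x{\cal M}$, so it cannot rule out the dimension collapse you are worried about.

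The paper does not attempt to track the $w_i$ through the projection. Instead it case-splits on whether the full Euclidean gradient $\nabla_x\tilde f(z^{(k)},\mu_k)$ stays bounded along the sequence. If it does, one extracts a subsequential limit $u\in G_{\tilde f}(x)\subseteq\hat\partial f(x)$ by the Euclidean part already proved and then projects exactly as in the proof of Proposition~\ref{propostion-gradient-consistency}. If it does not, the paper normalizes the blowing-up $I$-block $B_I^\top Y_k^2/\|Y_k^2\|$, takes its tangent--normal decomposition at $z^{(k)}$, and uses the boundedness of the \emph{projected} $I$-contribution together with $\|Y_k^2\|\to\infty$ to drive the tangential component to zero; it then identifies $v$ with $\operatorname{Proj}_{\T_x{\cal M}}(\nabla\hat f(x)+\nabla f_1(x))$, i.e.\ it takes $w_i=0$ for every $i\in I$ and uses the minorant $\bar h=\hat f+f_1$ alone. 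The key device you are missing is this bounded/unbounded dichotomy, and in particular the realization that in the unbounded regime you never need to manufacture nontrivial $w_i$ for $i\in I$ --- dropping those terms entirely already gives a valid minorant, which sidesteps the projected-subspace issue.
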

\begin{proof}
For any $x\in {\mathbb{R}^n}$, let us denote the index sets
\[
 I_1 := \{i\mid (Bx)_i \ne 0\},\quad\mbox{and}\quad I_2:= \{i\mid (Bx)_i =0\},
\]
and correspondingly for any $z\in \mathbb{R}^n$, define
\begin{eqnarray*}
 f_1(z) := \lambda \sum_{i\in I_1} |(Bz)_i|^p,\quad \mbox{and} 
\quad f_2(z):=\lambda \sum_{i\in I_2} |(Bz)_i|^p,\\
 \tilde f_1(z,\mu):= \lambda\sum_{i\in I_1} [\tilde s_{\mu}((Bz)_i)],
\quad \mbox{and}\quad
\tilde f_2(z,\mu):=\lambda  \sum_{i\in I_2} [\tilde s_{\mu}((Bz)_i)].
\end{eqnarray*}
Clearly 
\[
\lambda \|Bz\|_p^p = f_1(z)+f_2(z),\quad \mbox{and}\quad \tilde f(z,\mu) = \hat f(z) + \tilde f_1(z,\mu) + \tilde f_2(z,\mu).
\]

For any $u\in G_{\tilde f}(x)$, we know that there exist sequence $z_k \to x$, and $\mu_k \downarrow 0$ as $k\to \infty$ such that 
\begin{eqnarray}\label{u}
u = \lim_{z_k \to x,\ \mu_k \downarrow 0} \nabla_x \tilde f(z_k,\mu_k).
\end{eqnarray}
It is clear that 
\begin{eqnarray}\label{gra-tf-exp}
\nabla_x \tilde f(z_k,\mu_k) = \nabla \hat f(z_k) + \nabla_x \tilde f_1(z_k,\mu_k) + \nabla_x \tilde f_2(z_k,\mu_k),
\end{eqnarray}
and 
\begin{eqnarray}\label{gra-tf-exp-2}
\lim_{k\to \infty} \nabla \hat f(z_k) = \nabla \hat f(x) \quad \mbox{and}\quad \lim_{k\to \infty} \nabla_x \tilde f_1(z_k,\mu_k) = \nabla f_1(x). 
\end{eqnarray}

By direct computation,
\begin{eqnarray} \label{gra-tf2}
\nabla_x \tilde f_2(z_k,\mu_k)=
        \sum_{i\in I_2}\lambda p\left(\tilde s_{\mu_k}((Bz_k)_i)\right)^{p-1} [\tilde s_{\mu_k}((Bz_k)_i)]' B_{i.}^\top =  B_{I_2}^\top Y_{k}.
\end{eqnarray}
Here $B_{i.}$ is the $i$-th row of $B$, $B_{I_2}$ is the submatrix of $B$ defined by  $B_{I_2} = (B_{i.})_{i\in I_2}$, and   
\[
Y_{k}:= Y_{k}(z_k,\mu_k)=\lambda p \left((\tilde s_{\mu_k}((Bz_{k})_i))^{p-1}  [\tilde s_{\mu_k}((Bz_k)_i)]'\right)_{i\in I_2} \in \mathbb{R}^{|I_2|},
\]
with $|I_2|$ being the cardinality of the index set $I_2$. 
Let $N(C)$ be the null space of the matrix $C$ and $N(C)^{\perp}$ be its orthogonal complement.
It is known that $Y_{k}$ can be uniquely written as 
\begin{eqnarray}
\label{dec-Y}Y_{k} = Y_{k}^1 + Y_{k}^2, 
\quad \mbox{where}\ Y_{k}^1 \in N(B_{I_2}^\top),\ Y_{k}^2\in N(B_{I_2}^\top)^{\perp} .
\end{eqnarray}

We claim that $\{Y_{k}^2\}$ is bounded along with $z_k \to x$, $\mu_k \downarrow 0$ as $k\to \infty$.
Otherwise, there exists an infinite subsequence $K_1\subseteq\{1,2,\ldots\}$ such that 
 $$\lim_{k\to \infty,\ k\in K_1} \|Y_{k}^2\|= \infty.$$ 
Because 
\[
\frac{Y_{k}^2}{\|Y_{k}^2\|} \in N(B_{I_2}^\top)^{\perp}, \quad \mbox{and}\ \left\{\frac{Y_{k}^2}{\|Y_{k}^2\|}\right\} \ \mbox{is bounded}, 
\]
there exists an infinite subsequence $K_2 \subseteq K_1$ such that 
\begin{eqnarray}\label{barY}
\lim_{ k\to \infty,\ k\in K_2} \frac{Y_{k}^2}{\|Y_{k}^2\|} = \bar Y \in N(B_{I_2}^\top)^{\perp} , \quad \mbox{with}\ \|\bar Y\| = 1.
\end{eqnarray}
This, together with \eqref{gra-tf2} implies that $B_{I_2}^\top \bar Y \ne 0,$
and 
\begin{eqnarray*}
\lim_{k\to \infty,\ k\in K_2} \|\nabla_x \tilde f_2(z_k,\mu_k) \|
&=& 
  \lim_{k\to \infty,\ k\in K_2}\|B_{I_2}^\top \left(Y_{k}^1 + Y_{k}^2 \right)\|\\
&=&
  \lim_{k\to \infty,\ k\in K_2}  \left\|B_{I_2}^\top \left(\|Y_{k}^2\| \frac{Y_{k}^2}{\|Y_{k}^2\|} \right)   \right\| = \infty.
\end{eqnarray*} 
Hence, by using \eqref{gra-tf-exp} and \eqref{gra-tf-exp-2}, we find
$
 \|\nabla_x \tilde f(z_k,\mu_k)\|   \to \infty 
$
as $k\to \infty, k\in K_2$,
which contradicts to \eqref{u} that $u \in \mathbb{R}^n$ cannot have components tending to infinity.

From the boundedness of $\{Y_{k}^2\}$, we know that there exists an infinite subsequence
 $K_3 \subseteq \{1,2,\ldots\}$ such that 
\begin{eqnarray*}
\lim_{k\to \infty,\ k\in K_3} Y_{k}^2 = \hat Y \in \mathbb{R}^{I_2}.
\end{eqnarray*}
Hence 
\begin{eqnarray}\label{u2}
u = \lim_{z_k \to x,\ \mu_k \downarrow 0} \nabla_x \tilde f(z_k,\mu_k)
    = \nabla \hat f(x) + \nabla f_1(x) +  B_{I_2}^\top \hat Y.
\end{eqnarray}
Let us define the function
\[
h(z) = \hat f(z) + f_1(z) +   \sum_{i\in I_2} \hat Y_i (Bz)_i. 
\]
Note that for any $\nu\in R$ , there exists  some $\delta>0$ such that 
\[
|t|^p > \nu t\quad \mbox{for any}\ |t|\le\delta.
\]
We can easily find that there exists a neighborhood $B_{x,\bar \delta}$ of $x$ such that  for any $z\in B_{x,\bar \delta}$, $h(z)\le  f(z)$, and $h(x) = f(x)$. Thus by Proposition \ref{prop2.2}, we have
\[
\nabla h(x) = \nabla \hat f(x) + \nabla f_1(x) +  B_{I_2}^\top \hat Y \in \hat \partial f(x) \subseteq \partial f(x). 
\]
This, combining with \eqref{u2} yields $u\in \partial f(x)$. Since both $x\in \mathbb{R}^n$ and $u\in G_{\tilde f}(x)$ are arbitrary, we get that $\tilde f$ defined in \eqref{smoothing-example} satisfies the gradient sub-consistency on $\mathbb{R}^n$. 

Below we show that $\tilde f$ also satisfies the Riemannian gradient sub-consistency on the submanifold $\cal M$.

For any $x\in \cal M$, let $v\in G_{\tilde f, \cal R}(x)$. Then there exists infinite sequence $z_k\in \cal M$, $z_k\to x$, $\mu_k \downarrow 0$ as $k\to \infty$ such that 
\begin{eqnarray}\label{v}
v &=& \lim_{z_k \to x, \ z_k \in {\cal M},\ \mu_k \downarrow 0} \operatorname{grad} \tilde f(z_k,\mu_k) \nonumber\\
&=& \lim_{z_k\to x,\ z_k\in {\cal M}, \ \mu_k \downarrow 0} {\rm Proj}_{\T_{z_k}{\cal M}} \nabla_x \tilde f(z_k,\mu_k).
\end{eqnarray}

If $\{\nabla_x \tilde f(z_k,\mu_k)\}$ is bounded, noting that $\tilde f$ satisfies the gradient sub-consistency on $\mathbb{R}^n$, and following the similar arguments in the proof of Proposition \ref{propostion-gradient-consistency}, we can show that $v\in \partial_{\cal R} f(x)$.

Otherwise, there exists an infinite subsequence $K \subseteq \{1,2,\ldots\}$ such that 
\[
\{\| \nabla_x \tilde f(z_k,\mu_k)\|\}_{k\in K} \to \infty,
\]
which indicates that $\{\|\nabla_x \tilde f_2(z_k,\mu_k)\|\}_{k\in K} \to \infty$
by noting \eqref{gra-tf-exp} and \eqref{gra-tf-exp-2}. By \eqref{gra-tf2} and \eqref{dec-Y}, we know
\begin{eqnarray}\label{gra-tf2-new}
\nabla_x \tilde f_2(z_k,\mu_k) =  B_{I_2}^\top Y_{k}^2,\quad \mbox{where}\ Y_{k}^2 \in N(B_{I_2}^\top)^{\perp}.
\end{eqnarray}
Hence 
\begin{eqnarray}\label{tend-inf}
 \{\|B_{I_2}^\top Y_{k}^2\|\}_{k\in K}  \to \infty,\quad \mbox{and}\ \{\|Y_k^2\|\}_{k\in K} \to \infty.
\end{eqnarray}
For any sequences $g_k^1, g_k^2 \in \mathbb{R}^n$, it is easy to see that
\begin{align*}
& \left\|\operatorname{Proj}_{\T_{z_k}\cal M} g_k^2\right\| - \left\|\operatorname{Proj}_{\T_{z_k}\cal M} (g_k^1+g_k^2)\right\| \\
\le & \left\|\operatorname{Proj}_{\T_{z_k}\cal M}  (g_k^1+g_k^2)  - \operatorname{Proj}_{\T_{z_k}\cal M} g_k^2\right\|
\le  \left\|g_k^1\right\|,
\end{align*} 
which implies 
\begin{eqnarray}\label{gk12}
\left\|\operatorname{Proj}_{\T_{z_k}\cal M} g_k^2 \right\| 
\le \left\|\operatorname{Proj}_{\T_{z_k}\cal M} (g_k^1 + g_k^2)  \right\| + \left\| g_k^1 \right\|.
\end{eqnarray}
By substituting $g_k^1 = \nabla \hat f(z_k) + \nabla_x \tilde f_1(z_k,\mu_k)$ and $g_k^2 = \nabla_x \tilde f_2(z_k,\mu_k)$ into \eqref{gk12}, we have 
\[
\left\|\operatorname{Proj}_{\T_{z_k}\cal M} \nabla_x \tilde f_2(z_k,\mu_k)\right\| \le \left\|\operatorname{Proj}_{\T_{z_k}\cal M}  \nabla_x \tilde f(z_k,\mu_k)\right\| + \left\|\nabla \hat f(z_k) + \nabla_x \tilde f_1(z_k,\mu_k)\right\|.
\]
The two terms on the right-hand side of the above inequality are bounded by noting \eqref{v} and \eqref{gra-tf-exp-2}. Thus 
\begin{eqnarray}\label{proj-bounded}
\left\{\left\|\operatorname{Proj}_{\T_{z_k}\cal M} \nabla_x \tilde f_2(z_k,\mu_k)\right\|\right\}\  \mbox{is bounded.}
\end{eqnarray}
Using \eqref{tend-inf}, we may assume without loss of generality that
\[
\lim_{k\to \infty,\ k\in K} \frac{Y_k^2}{\|Y_k^2\|}=\bar Y \ne 0.
\]
We can write  
\begin{eqnarray}\label{decom1}
B_{I_2}^\top  \frac{Y_k^2}{\|Y_k^2\|} = d_k^1 + d_k^2,\quad \mbox{where}\ d_k^1\in \T_{z_k}{\cal M}, d_k^2 \in (\T_{z_k} {\cal M})^{\perp};\\
\label{decom2}
\nabla \hat f(z_k) + \nabla_x \tilde f_1(z_k,\mu_k) = a_k^1 + a_k^2, \quad \mbox{where}\ a_k^1\in \T_{z_k}{\cal M}, a_k^2 \in (\T_{z_k} {\cal M})^{\perp}.
\end{eqnarray}
Here $(\T_{z_k} {\cal M})^{\perp}$ is the orthogonal complement of $\T_{z_k}{\cal M}$.

For any scalar $\alpha>0$ and $g_k\in \mathbb{R}^n$, it is not difficult to show that 
\begin{eqnarray}\label{proj-property-1}
\operatorname{Proj}_{\T_{z_k}{\cal M}} \alpha g_k = \alpha \operatorname{Proj}_{\T_{z_k}{\cal M}} g_k. 
\end{eqnarray}
Thus 
\begin{align*}
   & \operatorname{Proj}_{\T_{z_k}{\cal M}} \nabla_x \tilde f_2(z_k,\mu_k) \\
= & \operatorname{Proj}_{\T_{z_k}{\cal M}} B_{I_2}^\top Y_k^2 = \operatorname{Proj}_{\T_{z_k}{\cal M}} \|Y_k^2\| B_{I_2}^\top \frac{ Y_k^2}{\|Y_k^2\|}\\
=& \|Y_k^2\|\operatorname{Proj}_{\T_{z_k}{\cal M}} B_{I_2}^\top \frac{ Y_k^2}{\|Y_k^2\|}= \|Y_k^2\|\operatorname{Proj}_{\T_{z_k}{\cal M}} (d_k^1 + d_k^2)= \|Y_k^2\| d_k^1,
\end{align*}
where the third equality employs \eqref{proj-property-1}. In view of \eqref{tend-inf} and \eqref{proj-bounded}, we get
\[
\lim_{k\to \infty,\ k\in K} d_k^1 = 0.
\]
By using \eqref{decom1} and \eqref{decom2}, we get 
\begin{eqnarray*}
\operatorname{Proj}_{\T_{z_k}{\cal M}} \nabla_x \tilde f(z_k,\mu_k) 
&=& \operatorname{Proj}_{\T_{z_k}{\cal M}} (\nabla \hat f(z_k) + \nabla_x \tilde f_1(z_k,\mu_k) + B_{I_2}^\top Y_k^2)\\
&=& \operatorname{Proj}_{\T_{z_k}{\cal M}} (a_k^1 + a_k^2+ d_k^1 + d_k^2)\\
&=& \operatorname{Proj}_{\T_{z_k}{\cal M}}(a_k^1 + d_k^1)= a_k^1 + d_k^1.
\end{eqnarray*}
Consequently, 
\begin{eqnarray*}
v &=& \lim_{z_k\to x,\ z_k\in {\cal M},\ \mu_k \downarrow 0} \operatorname{Proj}_{T_{z_k}{\cal M}} \nabla_x \tilde f(z_k,\mu_k) \\
&=& \lim_{k\to \infty,\ k\in K} (a_k^1 + d_k^1)= \lim_{k\to \infty,\ k\in K} a_k^1\\
&=& \lim_{z_k\to x,\ z_k\in {\cal M},\ \mu_k \downarrow 0} \operatorname{Proj}_{T_{z_k}{\cal M}} (\nabla \hat f(x) + \nabla f_1(x)).
\end{eqnarray*}
We now define function $\bar{h}(z) = \hat f(z) + f_1(z)$. It is then easy to check that there exists a neighborhood $B_{x,\delta}$ 
 for some $\delta>0$ such that
$\bar h(z)\le f(z)$ with $\bar h(x) = f(x)$, and $\nabla \bar h(x) = \nabla \hat f(x) + \nabla f_1(x)$. Then by Proposition \ref{prop2.2}, $\nabla \bar h(x)  \in \hat \partial f(x)$. 
Hence 
\begin{eqnarray*}
v= \operatorname{Proj}_{T_{z_k}{\cal M}}(\nabla \hat f(x)+\nabla f_1(x)) \in \hat \partial_{\cal R}f(x) \subseteq \partial_{\cal R}f(x).
\end{eqnarray*}
Therefore, $\tilde f$ satisfies the Riemannian gradient sub-consistency as desired.
\end{proof}

\section{Riemannian smoothing steepest descent method}\label{sec:Riemannian RSSD}

In this section, we present our RSSD method, which is detailed in Algorithm \ref{RSSD}.
\begin{algorithm}
\caption{Riemannian smoothing steepest descent method (RSSD) for solving \eqref{model}}\label{RSSD}
\begin{algorithmic}[1]
\STATE{{\bf Input:} $x_0 \in {\cal M}$, $\delta_{opt}\ge 0$, $\delta_0 >0$, $\mu_{opt}\ge 0$, $\mu_0 >0$, $\beta\in (0,1)$, $\bar \alpha>0$, $\theta_{\delta}\in (0,1)$, $\theta_{\mu}\in (0,1)$.} 
\FOR{$\ell = 0,1,2,\ldots$}
\STATE{Compute $\eta_{\ell} = - \operatorname{grad} \tilde f(x_{\ell},\mu_{\ell}).$}
\IF{$\|\eta_{\ell}\| \le \delta_{opt}$ and $\mu_{\ell} \le \mu_{opt}$}
	\STATE{return}
\ELSIF{$\|\eta_{\ell}\| \le \delta_{\ell}$}
	\STATE{$\mu_{\ell+1} := \theta_{\mu} \mu_{\ell}, \delta_{\ell+1} :=\theta_{\delta} \delta_{\ell}$,}
	\STATE{$x_{\ell +1}:= x_{\ell}$.}
\ELSE
	\STATE{$\mu_{\ell + 1} = \mu_{\ell}, \delta_{\ell+1} = \delta_{\ell}$.}
	\STATE{Find $t_{\ell}:= \beta^m \bar \alpha$  where $m$ is the smallest integer such that
\begin{equation}\label{Armijo}
\tilde f(R_{x_{\ell}}(\beta^m \bar \alpha \eta_{\ell}),\mu_{\ell}) \le
\tilde f(x_{\ell},\mu_{\ell}) - \sigma \beta^m \bar \alpha\|\operatorname{grad} \tilde f(x_{\ell},\mu_{\ell})\|^2.
\end{equation}}
\STATE{Set $x_{\ell+1} := R_{x_{\ell}}(t_{\ell} \eta_{\ell}).$}
\ENDIF
\ENDFOR
\end{algorithmic}
\end{algorithm}

A few remarks for Algorithm \ref{RSSD} are in demand. First, the line search \eqref{Armijo} is well defined and $t_{\ell}$ can be found in finite trials. Note that for fixed $\mu_{\ell}$, $\tilde f(\cdot,\mu_{\ell})$ is continuously differentiable. Clearly, we have
\[
  \lim_{t\downarrow 0} \frac{\tilde f_{\mu_{\ell}} \circ R_{x_{\ell}} (t \eta_{\ell})-
  \tilde f_{\mu_{\ell}} \circ R_{x_{\ell}}(0_{x_{\ell}})}{t}= (\tilde f_{\mu_{\ell}} \circ R_{x_{\ell}})'(0_{x_{\ell}},\eta_{\ell}) = \langle \operatorname{grad} \tilde f(x_{\ell}, \mu_{\ell}), \eta_{\ell} \rangle.
\]
Note that $\eta_{\ell} = -\operatorname{grad} \tilde f(x_l,\mu_l)$. Thus there exists $\alpha >0$ such that for all $t\in (0,\alpha)$,
\[
  \tilde f_{\mu_{\ell}} \circ R_{x_{\ell}}(t \eta_{\ell}) \le \tilde f_{\mu_{\ell}} \circ R_{x_{\ell}} (0_{x_{\ell
  }}) - t \sigma \|\operatorname{grad} \tilde f(x_{\ell},\mu_{\ell})\|^2.
\]
This guarantees that the line search step \eqref{Armijo} is well defined.

The convergence result of Algorithm \ref{RSSD} requires the following assumption.

\begin{assumption}\label{assump}
For any fixed $\bar \mu>0$ and any given vector $\bar x \in {\cal M}$, the level set ${\cal L}_{\bar x, \bar \mu} = \{x\in {\cal M}\mid \tilde f(x,\bar \mu) \le \tilde f(\bar x, \bar \mu) \}$ is compact.
\end{assumption}

It is easy to see that this assumption holds if $\cal M$ is a sphere or the Stiefel manifold.

\begin{proposition}\label{pro1-converge}
Assume Assumption \ref{assump} holds. Let $K = \{\ell \mid \|\eta_{\ell}\| \le \delta_{\ell}\}$ and  $\{x_{\ell}\}$ be an infinite sequence
generated by Algorithm \ref{RSSD} with $\delta_{opt} = \mu_{opt} =0$.
Then $ K$ is an infinite set and
\begin{eqnarray}\label{converge-1}
\lim_{\ell \to \infty,\ \ell \in {K}} \delta_{\ell} =0\quad \mbox{and}\quad
\lim_{\ell \to \infty,\ \ell \in { K}} \mu_{\ell} = 0.
\end{eqnarray}
\end{proposition}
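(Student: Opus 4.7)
The strategy is to separate the two claims. The equalities in \eqref{converge-1} are automatic once we know $K$ is infinite: whenever $\ell\in K$ the algorithm sets $\mu_{\ell+1}=\theta_\mu\mu_\ell$ and $\delta_{\ell+1}=\theta_\delta\delta_\ell$ with $\theta_\mu,\theta_\delta\in(0,1)$, and neither parameter ever increases, so traversing $K$ an infinite number of times forces both to $0$. Thus the real content of the proposition is showing that $K$ must be infinite, and I would argue this by contradiction.

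Assume $K$ is finite. Then there is an index $\ell_0$ such that for every $\ell\ge\ell_0$ the algorithm executes the line-search branch, which means $\|\eta_\ell\|>\delta_\ell$ for all $\ell\ge\ell_0$, and the parameters freeze at constants $\bar\mu:=\mu_{\ell_0}>0$ and $\bar\delta:=\delta_{\ell_0}>0$. The Armijo step \eqref{Armijo} then yields
\[
\tilde f(x_{\ell+1},\bar\mu)\le \tilde f(x_\ell,\bar\mu)-\sigma t_\ell\|\eta_\ell\|^2,\qquad \ell\ge\ell_0,
\]
so the whole tail sequence stays in the level set $\mathcal L_{x_{\ell_0},\bar\mu}$, which is compact by Assumption \ref{assump}. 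Hence $\tilde f(\cdot,\bar\mu)$ is bounded below on this set and the decreasing sequence $\tilde f(x_\ell,\bar\mu)$ converges. Telescoping the inequality gives $\sum_{\ell\ge\ell_0}t_\ell\|\eta_\ell\|^2<\infty$, and since $\|\eta_\ell\|>\bar\delta>0$ this forces $t_\ell\to 0$.

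Now I would run the classical ``Armijo fails at the previous trial'' argument, adapted to the manifold via the retraction. For all sufficiently large $\ell$ one has $t_\ell<\bar\alpha$, which means the trial step $t_\ell/\beta$ failed \eqref{Armijo}:
\[
\frac{\tilde f(R_{x_\ell}((t_\ell/\beta)\eta_\ell),\bar\mu)-\tilde f(x_\ell,\bar\mu)}{t_\ell/\beta}> -\sigma\|\eta_\ell\|^2.
\]
Extract a subsequence $x_\ell\to x^*\in\mathcal L_{x_{\ell_0},\bar\mu}$; by continuity of $\operatorname{grad}\tilde f(\cdot,\bar\mu)$ the directions converge, $\eta_\ell\to\eta^*:=-\operatorname{grad}\tilde f(x^*,\bar\mu)$, with $\|\eta^*\|\ge\bar\delta>0$. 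Letting $\ell\to\infty$ along this subsequence and invoking the mean value theorem together with $\mathrm dR_x(0_x)=\mathrm{id}_{\mathrm T_x\mathcal M}$ (so the $t$-derivative of $\tilde f\circ R_{x_\ell}(t\eta_\ell)$ at $t=0$ is $\langle\operatorname{grad}\tilde f(x_\ell,\bar\mu),\eta_\ell\rangle$) gives $\langle\operatorname{grad}\tilde f(x^*,\bar\mu),\eta^*\rangle\ge -\sigma\|\eta^*\|^2$, i.e.\ $-\|\eta^*\|^2\ge-\sigma\|\eta^*\|^2$. With $\sigma\in(0,1)$ this yields $\eta^*=0$, contradicting $\|\eta^*\|\ge\bar\delta$.

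The routine parts are the telescoping and compactness arguments; the main obstacle is executing the limit of the failed-Armijo inequality cleanly on the manifold. It requires using smoothness of $R$ to pass from difference quotients along $t\mapsto R_{x_\ell}(t\eta_\ell)$ to the intrinsic Riemannian gradient at the limit $x^*$, which in turn needs the uniform convergence of $\operatorname{grad}(\tilde f\circ R_{x_\ell})$ on compact sets of tangent vectors as $x_\ell\to x^*$. Once this is in place, the contradiction is immediate and the proposition follows.
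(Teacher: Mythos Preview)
Your argument is correct and shares the paper's overall structure: assume $K$ finite, freeze $\mu$ and $\delta$, and derive a contradiction from the Armijo descent. The paper, however, condenses the entire middle portion of your proof---the telescoping bound, the conclusion $t_\ell\to 0$, the failed-Armijo limit, and the passage to the Riemannian gradient at $x^*$---into a single citation of Corollary~4.3.2 in \cite{Absilbook}, which guarantees directly that Riemannian steepest descent with Armijo backtracking on a compact sublevel set satisfies $\|\operatorname{grad}\tilde f(x_\ell,\bar\mu)\|\to 0$. Your self-contained route is essentially the standard proof of that corollary; the technical point you flag about uniform behaviour of $\tilde f\circ R_{x_\ell}$ as the base point $x_\ell$ moves to $x^*$ is precisely what the smoothness of $R$ on the tangent bundle supplies and what the cited result absorbs. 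Both proofs reach the same contradiction; yours is more elementary and self-contained, the paper's is a one-line appeal to an established convergence theorem.
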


\begin{proof}
Suppose on the contrary that ${K}$ is a finite set. This means there exists
$\bar {\ell}$ such that for all $\ell \ge \bar {\ell}$,
\[
\delta_{\ell} \equiv \delta_{\bar \ell},\quad \mu_{\ell} \equiv \mu_{\bar \ell}, \quad
\mbox{and}\quad
\|\eta_{\ell}\| > \delta_{\bar \ell}.
\]
Therefore, for $\ell  \ge \bar {\ell}$, we have $x_{\ell+1} = R_{x_{\ell}} (t_{\ell} \eta_{\ell})$, where $t_{\ell}$ is obtained by using the line search \eqref{Armijo} with fixed $\mu_{\bar \ell}$. Using Assumption \ref{assump} and Corollary 4.3.2 of \cite{Absilbook}, we obtain
\[
\lim_{\ell \to \infty} \|\eta_{\ell}\| = \lim_{\ell \to \infty}
  \|\operatorname{grad} \tilde f(x_{\ell},\mu_{\ell}) \| =  \lim_{\ell \to \infty}
  \|\operatorname{grad} \tilde f(x_{\ell},\mu_{\bar \ell}) \|=0,
\]
which contradicts to $\|\eta_{\ell}\| > \delta_{\bar \ell}$ for all $\ell \ge \bar \ell$.
Therefore, $ K$ is an infinite set.

Note that for each $\ell \in {K}$, we have
\[
\mu_{\ell +1} = \theta_{\mu} \mu_{\ell}\quad \mbox{and}\quad \delta_{\ell+1} = \theta_{\delta} \delta_{\ell}
\]
with decaying factors $\theta_{\mu} \in (0,1)$ and $\theta_{\delta}\in (0,1)$.
This, together with $ K$ being an infinite set, yields \eqref{converge-1} as desired.
\end{proof}

\begin{theorem}\label{convergence-theorem}
 Assume Assumption \ref{assump} holds. Let $K = \{\ell\mid\|\eta_{\ell}\| \le \delta_{\ell}\}$ and  $\{x_{\ell}\}$ be an infinite sequence
generated by Algorithm \ref{RSSD} with $\delta_{opt} = \mu_{opt} =0$. Then the following statements hold.
\begin{itemize}
\item[(i)] Any accumulation point of $\{x_{\ell}\}_{\ell \in K}$ is a stationary point of \eqref{model} associated with $\tilde f$ on the submanifold $\cal M$.
\item[(ii)] In addition, if $\tilde f$ satisfies the Riemannian gradient sub-consistency, then any accumulation point of $\{x_{\ell}\}_{\ell \in K}$ is a Riemannian limiting stationary point of \eqref{model}.
\end{itemize}
\end{theorem}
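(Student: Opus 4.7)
The plan is to leverage Proposition \ref{pro1-converge}, which already guarantees that the index set $K$ is infinite and that both $\delta_\ell \to 0$ and $\mu_\ell \to 0$ along $K$. The key observation is that by the very definition of $K$, every index $\ell \in K$ satisfies $\|\operatorname{grad}\tilde f(x_\ell,\mu_\ell)\| = \|\eta_\ell\| \le \delta_\ell$, so the Riemannian gradient of the smoothing function vanishes along the subsequence indexed by $K$. Both parts of the theorem should follow almost immediately from this fact together with the two definitions introduced in Section \ref{sec:Riemannian gradient}.

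For part (i), I would fix an accumulation point $x^* \in {\cal M}$ of $\{x_\ell\}_{\ell \in K}$ and select a subsequence $\{x_{\ell_k}\}$ with $\ell_k \in K$, $\ell_k \to \infty$, and $x_{\ell_k} \to x^*$. Since $\ell_k \in K$,
\begin{equation*}
\|\operatorname{grad}\tilde f(x_{\ell_k},\mu_{\ell_k})\| \le \delta_{\ell_k},
\end{equation*}
and by Proposition \ref{pro1-converge} the right-hand side tends to zero while $\mu_{\ell_k}\downarrow 0$. Consequently
\begin{equation*}
\liminf_{x\to x^*,\, x\in{\cal M},\, \mu\downarrow 0}\|\operatorname{grad}\tilde f(x,\mu)\| \le \lim_{k\to\infty}\|\operatorname{grad}\tilde f(x_{\ell_k},\mu_{\ell_k})\| = 0,
\end{equation*}
which is exactly the definition of $x^*$ being a stationary point of $f$ associated with $\tilde f$ on $\cal M$.

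For part (ii), I would feed the same subsequence into the definition \eqref{RGsf} of $G_{\tilde f,{\cal R}}(x^*)$. Because $x_{\ell_k}\in{\cal M}$, $x_{\ell_k}\to x^*$, $\mu_{\ell_k}\downarrow 0$, and $\operatorname{grad}\tilde f(x_{\ell_k},\mu_{\ell_k})\to 0$, the vector $0$ belongs to $G_{\tilde f,{\cal R}}(x^*)$. Invoking the assumed Riemannian gradient sub-consistency of $\tilde f$ at $x^*$, namely $G_{\tilde f,{\cal R}}(x^*)\subseteq \partial_{\cal R} f(x^*)$, immediately yields $0\in\partial_{\cal R} f(x^*)$. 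By Definition \ref{stationary-point}, this says $x^*$ is a Riemannian limiting stationary point of \eqref{model}.

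There is no real obstacle: the theorem is essentially a packaging statement. The only step that requires care is making sure the same subsequence used to define the accumulation point is used simultaneously for the gradient convergence and for the $\mu_\ell,\delta_\ell\to 0$ claims; both are delivered automatically once we restrict indices to $K$ and apply Proposition \ref{pro1-converge}. No new estimates, no Taylor expansions, and no retraction arguments beyond what has already been built up are needed.
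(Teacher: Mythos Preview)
Your proposal is correct and follows essentially the same route as the paper's own proof: both restrict to a subsequence in $K$ converging to the accumulation point, use Proposition~\ref{pro1-converge} together with $\|\eta_\ell\|\le\delta_\ell$ to drive the Riemannian gradient of the smoothing function to zero (giving part~(i) and $0\in G_{\tilde f,{\cal R}}(x^*)$), and then invoke the Riemannian gradient sub-consistency inclusion to obtain $0\in\partial_{\cal R}f(x^*)$ for part~(ii).
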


\begin{proof}
By Algorithm \ref{RSSD} and Proposition \ref{pro1-converge}, we have
\begin{eqnarray*}
\lim_{\ell \to \infty,\ \ell \in {K}} \|\operatorname{grad}\tilde f(x_{\ell},\mu_{\ell})\| =
\lim_{\ell \to \infty,\ \ell \in { K}} \|\eta_{\ell}\| \le \lim_{\ell \to \infty,\ \ell \in { K}} \delta_{\ell} = 0.
\end{eqnarray*}

Let $x^*$ be any accumulation point of $\{x_{\ell}\}_{\ell \in { K}}$ with $\check{ K}$ being a subsequence of $K$ such that
$\lim_{{\ell}\to \infty,\ \ell\in \check{K}} x_{\ell} = x^*$.
Thus
$$\liminf_{x\to x^*,\ x\in {\cal M},\ \mu \downarrow 0} \|{\rm grad} \tilde f(x,\mu)\| = 0,\ \mbox{and}\ 0 \in G_{\tilde f, {\cal R}}(x^*).$$ Hence $x^*$ is a stationary point of \eqref{model} associated with $\tilde f$ on the submanifold $\cal M$. That is, statement (i) holds.

In addition, if $\tilde f$ satisfies the Riemannian gradient sub-consistency, then 
we know 
$%\begin{eqnarray*}
G_{\tilde f,{\cal R}}(x^*) \subseteq \partial_{\cal R} f(x^*).
$ %\end{eqnarray*}
Thus we find
$%\begin{eqnarray*}
0 \in \partial_{\cal R} f(x^*).
$%\end{eqnarray*}
Hence $x^*$ is a Riemannian limiting stationary point of \eqref{model}. Consequently statement (ii) holds.
\end{proof}

\section{Numerical experiments}\label{sec:numerical}

In this section, we apply our RSSD method (Algorithm \ref{RSSD}) to solve two problems: finding a sparse vector in a subspace (FSV), and the sparsely-used orthogonal complete dictionary learning problem (ODL).  

\subsection{Finding a sparse vector in a subspace} 

The FSV problem seeks the sparsest vector in an $n$-dimensional linear subspace $W\subset\mathbb{R}^m$ ($m>n$). This problem has been studied recently and it finds interesting applications and connection with sparse
dictionary learning, sparse PCA, and many other problems in signal processing and machine learning \cite{Qu-FSV-TIT,Qu-survey}. This problem is also known as dual principal component pursuit and finds applications in robust subspace recovery \cite{Tsakiris-Vidal-dpcp-jmlr-2018,Zhu-DPCP-nips-2018}. Let $Q \in \mathbb{R}^{m\times n}$ denote a matrix whose columns form an orthonormal basis of $W$. The FSV problem can be formulated as 
\begin{eqnarray}\label{sparse-vector-0}
\min \ \|Qx\|_0,\quad \mbox{s.t.} \ x\in S^{n-1},
\end{eqnarray}
where $S^{n-1} = \{x\in R^n\mid \|x\|=1\}$ is the unit sphere, and $\|z\|_0$ counts the number of nonzero entries of $z$. Because of the combinatorial nature of the cardinality function $\|\cdot\|_0$, \eqref{sparse-vector-0} is very difficult to solve in practice. In the literature, people have been focusing on its $\ell_1$ norm relaxation given below \cite{Qu-survey,Qu-FSV-TIT,Tsakiris-Vidal-dpcp-jmlr-2018,Zhu-DPCP-nips-2018}:  
\begin{eqnarray}\label{sparse-vector-1}
\min \ \|Qx\|_1,\quad \mbox{s.t.}\  x\in S^{n-1},
\end{eqnarray}
where $\|z\|_1:=\sum_i |z_i|$ is the $\ell_1$ norm of vector $z$.
Many algorithms have been proposed for solving \eqref{sparse-vector-1}, including the Riemannian gradient sampling algorithm \cite{Hosseini}, projected subgradient method \cite{Zhu-psgm-nips-2019}, Riemannian subgradient method \cite{Li}, manifold proximal point algorithm \cite{Chen-ManPPA-2020} and so on. 

Moreover, for the compressive sensing problems that have the same objective functions as \eqref{sparse-vector-0} and \eqref{sparse-vector-1}, people have found that using the $\ell_p$ quasi-norm $\|z\|_p$ ($0<p<1$) to replace $\|z\|_1$ can help promote the sparsity of $z$ \cite{Chartrand-Yin-Lp-cs,Foucart-Lai-2009,Chen-Ge-Wang-Ye-2014,XuF2010,Liu-Dai-Ma-2015,Liu-Ma-Dai-Zhang-2016}. Motivated by this, we propose to consider the following $\ell_p$ ($0<p<1$) minimization model for the FSV problem: 
\begin{eqnarray}\label{sparse-vector-p}
\min \ f(x) :=\|Qx\|_p^p,\quad \mbox{s.t.}\ x\in S^{n-1},
\end{eqnarray}
where $\|z\|_p^p := \sum_i |z_i|^p$. Note that algorithms proposed in  \cite{Hosseini,Zhu-psgm-nips-2019,Li,Chen-ManPPA-2020} for solving \eqref{sparse-vector-1} do not apply to \eqref{sparse-vector-p}, because the objective function in \eqref{sparse-vector-p} is non-Lipschitz. We propose to solve \eqref{sparse-vector-p} using our RSSD algorithm, and we now show the details. 

According to \cite{Absilbook}, the tangent space at  $x\in S^{n-1}$ is 
\begin{eqnarray*}
\T_x S^{n-1} :=\{z\in \mathbb{R}^n \mid x^\top z =0\},
\end{eqnarray*}
and the projection of $\xi\in \mathbb{R}^n$ onto the tangent space $\T_x S^{n-1}$ is 
\[
\operatorname{Proj}_{{\T_x}S^{n-1}} \xi = (I-x x^\top) \xi.
\]
In our RSSD algorithm, we use 
$
R_x(\xi) = {(x+\xi)}/{\|x+\xi\|}
$  
as the retraction function. 
We use the following smoothing function for \eqref{sparse-vector-p}:
\begin{eqnarray}\label{tilde-f}
\tilde f(x,\mu) = \sum_{i=1}^m [s_{\mu}((Qx)_i)]^p,
\end{eqnarray}
where $s_{\mu}(t)$ is the uniform smoothing function for $|t|$ defined in \eqref{uniform-sf}.

Note that our RSSD can also solve the $\ell_1$ norm minimization problem \eqref{sparse-vector-1}. Therefore, we compare our RSSD with two existing algorithms: Riemannian gradient sampling (RGS) method \cite{Hosseini1} and Riemannian nonsmooth BFGS (RBFGS) method \cite{Hosseini2} on \eqref{sparse-vector-1}. For the $\ell_p$ quisi-norm minimization problem \eqref{sparse-vector-p}, since no existing method is available for solving it, we only use our RSSD method to solve it, and we test RSSD with different $p \in(0,1)$ to see the effect of $p$ to the problem \eqref{sparse-vector-p}.  

The FSV problems are generated as follows. We choose $n\in\{5,10,15,20\}$ and $m \in \{4n,6n,8n,10n,12n,14n\}$. The subspace $W$ is generated following the same way as \cite{Hosseini1}. More specifically, we first generate the vector $e=(1,\ldots,1,0,\ldots,0)^\top$ whose first $n$ components are $1$ and the remaining $m-n$ components are $0$. We then generate $n-1$ random vectors in $\mathbb{R}^m$. The subspace $W$ is the span of $e$ and these $n-1$ random vectors. We use $Q\in\mathbb{R}^{m\times n}$ to denote the matrix whose columns form an orthonormal basis of $W$. The minimum value for $\|Q x\|_0$ on the sphere is likely to equal to $n$ in this case. 

We terminate our RSSD method if $\mu_l<10^{-6}$ and $\delta_l<10^{-4}$. As suggested in \cite{Hosseini2}, we terminate the RGS and the RBFGS methods if one of the following two conditions is satisfied:
\begin{itemize}
\item[(i)] the step size is less than the machine precision $2.22\cdot 10^{-16}$;
\item[(ii)] $\epsilon_k \le 10^{-6}$ and $\delta_k \le 10^{-12}$.
\end{itemize}
Moreover, the maximum number of iterations is set to $n_{\max} = 1000$ for all three methods. Our RSSD, as well as the RGS method, are implemented in MATLAB. The RBFGS codes were provided to us by Wen Huang, one of the authors of \cite{Hosseini2}, and they were written in C++ with a MATLAB interface. 
The parameters in the RGS and the RBFGS methods are set following the suggestions in \cite{Hosseini} and \cite{Hosseini2}. 
The parameters of our RSSD method are set as
\begin{eqnarray}\label{para-RSSD}
\mu_0 = 1,\ \delta_0 = 0.1, \theta_{\mu} = 0.5,\ \theta_{\delta} = 0.5.
\end{eqnarray}
We choose the initial points from normally distributed random vectors, using MATLAB code
\begin{eqnarray*}
{x^{0} = {\rm randn}(n,1); x^{0} = x^{0}/{\rm norm}(x^{0})}.
\end{eqnarray*}
For each $(m,n)$, we generate 50 random instances with 50 random initial points. We claim that an algorithm successfully finds the sparsest vector if  $\|Q \hat x\|_0 = n$ where $\hat x$ is the computed solution. Here, when we count the number of nonzeros of $Q \hat x$, we truncate the entries as
\[(Q\hat x)_i =0,\quad {\mbox{if}}\ |(Q\hat x)_i|< \tau,\]
where $\tau > 0$ is a pregiven tolerance.  
We report the number of successful cases out of 50 cases in Tables \ref{comparison-p1-n5} and \ref{comparison-p1-n15}. For RGS, we run the algorithm for 50 runs for each initial point and we also report the standard deviation of the average number of the successful cases.

Tables \ref{comparison-p1-n5} and \ref{comparison-p1-n15} record the number of success for the three methods for the $\ell_1$ model \eqref{sparse-vector-1} with different parameters $(m,n)$. The bold numbers in the tables highlight the largest number of success for the corresponding $(m,n)$. Comparing RGS, RBFGS and RSSD, we see that our RSSD method can provide a solution with the best accuracy, because when $\tau = 10^{-8}$, both the RGS and the RBFGS fail to recover the groundtruth,  but our RSSD method can still recover the groudtruth in many instances. 
From Tables \ref{comparison-p1-n5} and \ref{comparison-p1-n15} we see that, in the total 64 cases, RSSD performed the best in 39 cases. For other cases that RSSD is not the best, it is still comparable in most cases.

In Tables \ref{comparison-p1-n5} and \ref{comparison-p1-n15} we also report the results for RSSD-g, which incorporates a global technique to RSSD by selecting the best parameters $(\theta_{\mu}, \theta_{\delta})$ from a subset of choices. More specifically, it is worth mentioning that Example \ref{example-3.1} indicates that the different relations of the sequence of unknowns and the sequence of the smoothing parameters may yield different accumulation points. The parameter $\theta_{\mu}$ in our RSSD method controls the speed of the smoothing function that approaches to the original function, and the parameter $\theta_{\delta}$ determines the requirement of accuracy for the approximated solution along with the iterations. The different  relations of the two sequences can be obtained by using different choices of $(\theta_{\mu},\theta_{\delta})$. The number of successful instances can be improved if we tune the parameters $(\theta_{\mu},\theta_{\delta})$ for different settings of $(m,n)$. We record in the last column of Tables \ref{comparison-p1-n5} and \ref{comparison-p1-n15} the numbers of successful instances of our RSSD method by selecting the best result using the different choices of
\begin{eqnarray}\label{multiple}
(\theta_{\mu}, \theta_{\delta}) = (0.5,0.5), (0.1,0.5), (0.5,0.1), (0.8,0.2), (0.2,0.8).
\end{eqnarray} 
We see from Tables \ref{comparison-p1-n5} and \ref{comparison-p1-n15} that by selecting the best parameters in \eqref{multiple}, the performance of RSSD is clearly significantly improved.

\begin{table}[tbhp]
\label{comparison-p1-n5}
\footnotesize{
\begin{center}
\caption{Number of success  from 50 random initial points for the $\ell_1$ minimization model \eqref{sparse-vector-1} and $n=5, 10$
 (result for RGS  is  ``average number of success  $\pm$ standard deviation").}
\vskip 2mm
\renewcommand\arraystretch{1.1}
\addtolength{\tabcolsep}{3pt}
\begin{tabular}{c|c|c|c|c||c}
\hline
             $(n,m) $ & $\tau$ & {RGS} & {RBFGS} & {RSSD} & {RSSD-g}\\ \hline
              $(5,20)$ & $10^{-5}$ &  { ${\bf 19.96 \pm 1.442}$} & 16  & 16 & 22  \\
                         & $10^{-6}$ & { ${\bf19.96 \pm 1.442}$} & 0 &  16  & 22\\
                         & $10^{-7}$ & $0.36  \pm 0.598$ & 0 &  {\bf 16} & 22\\
                         & $10^{-8}$ & $0 \pm 0$ & 0 & {\bf 16} & 22 \\ \hline
              $(5,30)$ & $10^{-5}$ &  ${\bf 26.12 \pm 2.537}$ & 22  &  21 & 30 \\
                         & $10^{-6}$ & ${\bf 26.12 \pm 2.537}$ & 0  &  21 & 30 \\
                         & $10^{-7}$ & $0.62 \pm 0.667$ & 0 &  {\bf 21} & 30\\
                         & $10^{-8}$ & $0 \pm 0$ & 0 &  {\bf 2} & 30\\ \hline
              $(5,40)$ & $10^{-5}$ &  ${\bf 45.54 \pm 1.555}$ & 31  &  28 & 43  \\
                         & $10^{-6}$ & ${\bf 45.54 \pm 1.555}$ & 1 & 28  & 43\\
                         & $10^{-7}$ & $0.78 \pm 0.932$ & 0 & \bf 28 & 43 \\
                         & $10^{-8}$ & $0 \pm 0$ & 0 & \bf 28 & 43 \\ \hline
              $(5,50)$ & $10^{-5}$ &  ${\bf 50 \pm 0}$ & 46  &   49  & 50 \\
                         & $10^{-6}$ & ${\bf 50 \pm 0}$  & 44 &  49 & 50 \\
                         & $10^{-7}$ & $0.7 \pm 0.814$ & 26 & {\bf 49} & 50 \\
                         & $10^{-8}$ & $0 \pm 0$ & 0 &  {\bf 49} & 50 \\ \hline\hline 
              $(10,60)$ & $10^{-5}$ &  $24.16 \pm 2.17$ & {\bf 26}  &  25 & 38 \\
                         & $10^{-6}$ & $24 \pm 2.231$ & {\bf 26}  &  25  & 38 \\
                         & $10^{-7}$ & $0 \pm 0$ & 18 & {\bf 25} & 38 \\
                         & $10^{-8}$ & $0 \pm 0$ & 0 & {\bf 25} & 38 \\ \hline
              $(10,80)$ & $10^{-5}$ &  ${\bf 32.5 \pm 2.013}$ & 31  &  29 & 44 \\
                         & $10^{-6}$ & ${\bf 32.42 \pm 1.960}$ & 31 &  29  & 44 \\
                         & $10^{-7}$ & $0.002 \pm 0.141$ & 18 &  {\bf 29} & 44\\
                         & $10^{-8}$ & $0 \pm 0$ & 0 & {\bf 29} & 44\\ \hline
              $(10,100)$ & $10^{-5}$ &  ${\bf 44.68 \pm 2.035}$ & 40    & 44 & 48  \\
                         & $10^{-6}$ & $\bf 44.56 \pm 1.971$  & 40 &  44 & 48 \\
                         & $10^{-7}$ & $0.02 \pm 0.141$ & 24 & \bf 44 & 48 \\
                         & $10^{-8}$ & $0 \pm 0$ & 0 &  \bf 44 & 48 \\ \hline
              $(10,120)$ & $10^{-5}$ &  $ \bf 46.44\pm 1.756$ & 41  &  36  & 46\\
                         & $10^{-6}$ & $ \bf 46.22\pm 1.753$  & 41  &  36 & 46 \\
                         & $10^{-7}$ & $  0.1 \pm  0.303$ & 31 & \bf 36 & 46\\
                         & $10^{-8}$ & $ 0 \pm 0$ & 0 &  {\bf 36} & 46 \\ \hline
\end{tabular}
\end{center}}
\end{table}

\begin{table}[tbhp]
\label{comparison-p1-n15}
\footnotesize{
\begin{center}
\caption{Number of success  from 50 random initial points for the $\ell_1$ minimization model \eqref{sparse-vector-1} and $n=15, 20$
 (result for RGS  is ``average number of success  $\pm$ standard deviation").}
\vskip 2mm
\renewcommand\arraystretch{1.1}
\addtolength{\tabcolsep}{3pt}
\begin{tabular}{c|c|c|c|c||c}
\hline
             $(n,m) $ & $\tau$ & {RGS} & {RBFGS} & {RSSD} & {RSSD-g}\\ \hline 
              $(15,90)$ & $10^{-5}$ &  $12.16 \pm 2.054$ & 9  & \bf 16 & 31   \\
                         & $10^{-6}$ & $12.16 \pm 2.054$ & 9  & \bf  16 & 31 \\
                         & $10^{-7}$ & $0 \pm 0$ & 4 & \bf 16 & 31 \\
                         & $10^{-8}$ & $0 \pm 0$ & 0 & \bf 16 & 31 \\ \hline
              $(15,120)$ & $10^{-5}$ &  $16.78 \pm 2.401$ & 17  &  \bf 20 & 36 \\
                         & $10^{-6}$ & $16.74 \pm 2.319$ & 17 &  \bf 20 & 36 \\
                         & $10^{-7}$ & $0 \pm 0$ & 11 & \bf 20 & 36  \\
                         & $10^{-8}$ & $0 \pm 0$ & 0 & \bf 20 & 36 \\ \hline
              $(15,150)$ & $10^{-5}$ &  $36.84 \pm 1.856$ & 40  &  \bf 41  & 48\\
                         & $10^{-6}$ & $36.8 \pm 1.906$  & 40 & \bf 41 & 48 \\
                         & $10^{-7}$ & $0 \pm 0$ & 36 & \bf 41 & 48 \\
                         & $10^{-8}$ & $0 \pm 0$ & 0 & \bf  41 & 48 \\ \hline
              $(15,180)$  & $10^{-5}$ &   $26.76\pm 2.162$ & {\bf 33} &  26 & 40 \\
                         & $10^{-6}$ & $ 26.66\pm 2.125 $  & {\bf 33} &  26 & 40 \\
                         & $10^{-7}$ & $ 0\pm 0$ & {\bf 31} & 26 & 40 \\
                         & $10^{-8}$ & $ 0\pm 0$ & 0  & \bf 26 & 40 \\ \hline\hline
               $(20,160)$ & $10^{-5}$ &  $19.64 \pm 2.819$ & 28  &  \bf 41  & 43\\
                         & $10^{-6}$ & $19.62 \pm 2.849$ & 28 & \bf  41 & 43\\
                         & $10^{-7}$ & $0  \pm 0$ & 20 &  \bf 41 & 43 \\
                         & $10^{-8}$ & $0 \pm 0$ & 0 &  \bf 38 & 43\\ \hline
              $(20,200)$ & $10^{-5}$ &  $20.74 \pm 2.717$ & 25  &  \bf 29  & 46\\
                         & $10^{-6}$ & $20.74 \pm 2.717$ & 24  & \bf 29 & 46 \\
                         & $10^{-7}$ & $0 \pm 0$ & 23 & \bf 29 & 46\\
                         & $10^{-8}$ & $0 \pm 0$ & 0 & \bf 29 & 46\\ \hline
              $(20,240)$ & $10^{-5}$ &  $20.60 \pm 2.441$ & {\bf 30}  &  24  & 35\\
                         & $10^{-6}$ & $20.58 \pm 2.400$ & {\bf 30} & 24  &35\\
                         & $10^{-7}$ & $0 \pm 0$ & {\bf 30} & 24 & 35 \\
                         & $10^{-8}$ & $0 \pm 0$ & 0 & {\bf 24} & 35\\ \hline
              $(20,280)$ & $10^{-5}$ &  $ 24.62\pm 2.230$ & {\bf 32}  &  27 & 37  \\
                         & $10^{-6}$ & $24.60 \pm 2.222$  & {\bf 32} & 27 & 37 \\
                         & $10^{-7}$ & $0 \pm 0$ & {\bf 32} & 27 & 37\\
                         & $10^{-8}$ & $0 \pm 0$ & 0 &  {\bf 27} & 37 \\ \hline
\end{tabular}
\end{center}}
\end{table}

We now report the results of solving the $\ell_p$ minimization model \eqref{sparse-vector-p} using our RSSD method. In Tables \ref{comparison1} and \ref{comparison} we again report the number of successes from 50 random instances. Here we only report the results for $\tau = 10^{-8}$. We also include the results for the $\ell_1$ minimization model \eqref{sparse-vector-1} for the purpose of comparison. Note that Table \ref{comparison} correponds to the RSSD-g, i.e., RSSD with parameters $(\theta_{\mu},\theta_{\delta})$ chosen as the best one in \eqref{multiple}.  
From Tables \ref{comparison1} and \ref{comparison} we see that the $\ell_p$ minimization model \eqref{sparse-vector-p} can indeed be better than the $\ell_1$ minimization model \eqref{sparse-vector-1}, as long as an appropriate $p$ is used.

\begin{table}[tbhp]
\label{comparison1}
\footnotesize{
\begin{center}
\caption{Number of success among runs from 50 random initial points
 for the $\ell_1$ minimization model \eqref{sparse-vector-1} and the $\ell_p$ minimization model \eqref{sparse-vector-p} with $p = 0.9,\ldots,0.1$, with $\tau = 10^{-8}$ by our RSSD method with $(\theta_{\mu},\theta_{\delta}) = (0.5,0.5)$.}
\vskip 2mm
\renewcommand\arraystretch{1.1}
\addtolength{\tabcolsep}{3pt}
\begin{tabular}{c|c|ccccccccc}
\hline
\multirow{2}{*}{$(m,n)$} & \multirow{2}{*}{$\ell_1$} & \multicolumn{9}{c}{$\ell_p$ minimization model with  $0<p< 1$}\\ \cline{3-11}
          &    & $0.9$ & $0.8$ & $0.7$ & $0.6$ & $0.5$ & $0.4$ & $0.3$ & $0.2$ & $0.1$\\ \hline
 $ (5,20)$ 
               & 16 & 17  &  17 & 19 &   17 & 19 & 19 & 19 & \bf 20 & \bf 20 \\
               
$(5,30)$ & 0 & 21 & \bf 22  & \bf 22 & 21 & 0 & \bf 22 & \bf 22 & \bf 22 & \bf 22 \\
                
$(5,40)$  & 28 & 29  &  \bf 35 & 33 &   28 & 31 & 31 & 30 & 30 & 33\\
                
$(5,50)$ & 49 & 49  &  \bf 50 & 49 &  49 & 49 & \bf 50 & 49 & 49 & 48 \\ \hline
               
$ (10,60)$
 
              & 25 & 27  &  \bf 28 & 26 &   25 & 25 & 25 & 25 & 25 & 25  \\
               
$(10,80)$
 
             & 29 & 27  & \bf 31 & 30 &  28 & 28 & 28 & 29 & 30 & 28 \\
                
$(10,100)$
 
             & \bf 44 & 43  &  42 & \bf 44 &  41 & 43 & 43 & 43 & \bf 44 & 43\\
                 
$(10,120)$
 
             & 36 & 35  &  35 & 37 &  38 & 35 & 37 & 38 & 36 & \bf 41 \\ \hline
 $(15,90)$
  
                & 16 & 16  &  18 & 18 &   18 & \bf 19 & \bf 19 & 17 & 16 & 16   \\
               
$(15,120)$
 
               & 20 & 21  & 23 & 19 &  21 & 24 & 23 & 24 & \bf 25 & 21 \\
                
$ (15,150)$
 
&  41 & 43  &  \bf 44 & 39 &  35 & 38 & 38 & 37 & 35 & 38\\
                 
$ (15,180)$
 
  & 26 & 26  &  26 & \bf 30 &  29 & 26 & 27 & 26 & 26 & 26  \\ \hline
 $(20,160)$
  
                & \bf 41 & 40  &  \bf 41 & 36 &  33 & 34 & 37 & 38 & 39 & 39   \\
               
$(20,200)$
 
               & 29 & 29  & 30 & \bf 33 &  \bf 33 & \bf 33 & 30 & 30 & 30 & 27 \\
                
$ (20,240)$
 
                & \bf 24 & 22  & 23 & 20 &  20 & 21 & 21 & 19 & 19 & 21\\
                 
$ (20,280)$
 
                & 27 & 26  &  28 & 27 &  \bf 29 & 25 & 24 & 24 & 24 & 26 \\
 \hline
\end{tabular}
\end{center}}
\end{table}

\begin{table}[tbhp]
\label{comparison}
\footnotesize{
\begin{center}
\caption{Number of success among runs from 50 random initial points
for the $\ell_1$ minimization model \eqref{sparse-vector-1} and the $\ell_p$ minimization model \eqref{sparse-vector-p} with $p = 0.9,\ldots,0.1$, with $\tau = 10^{-8}$ by our RSSD-g method, i.e., RSSD method using multiple choices $(\theta_{\mu},\theta_{\delta})$ in \eqref{multiple}.}
\vskip 2mm
\renewcommand\arraystretch{1.1}
\addtolength{\tabcolsep}{3pt}
\begin{tabular}{c|c|ccccccccc}
\hline
\multirow{2}{*}{$(m,n)$} & \multirow{2}{*}{$\ell_1$} & \multicolumn{9}{c}{$\ell_p$ minimization model with  $0<p< 1$}\\ \cline{3-11}
          &    & $0.9$ & $0.8$ & $0.7$ & $0.6$ & $0.5$ & $0.4$ & $0.3$ & $0.2$ & $0.1$\\ \hline
 $ (5,20)$
  
               & 22 & 21 & 20 & 22 & 22 & 23 & 23 & \bf 25 & 23 & 21 \\
               
$(5,30)$ 
         & 30 & 30 & 31 & 32 & 32 & 30 & 32 & 31 & \bf 35 & 27 \\
                
$(5,40)$  
          & 43 & 40 & 41 & 43 & 43 & \bf 44 & 42 & 43 & 42 & 42\\
                 
$(5,50)$
         & \bf 50 & \bf 50 & \bf 50 & \bf 50 & \bf 50 & \bf 50 & \bf 50 & \bf 50 & \bf 50 & \bf 50\\ \hline
               
$ (10,60)$
 
              & 38 & 39 & 41 & 37 & 39 & 39 & 38 & \bf 42 & 39 & 38\\
               
$(10,80)$
 
             & 44 & 42 & 43 & 42 & 42 & \bf 45 & 43 & \bf 45 & 43 & 40\\
                
$(10,100)$
 
             & 48 & 47 & 48 & 48 & \bf 49 & 48 & 48 & 48 & 48 & 47\\
                 
$(10,120)$
 
             & 46 & 47 & 46 & 46 &  46 & 46 & 44 & 45 & 46 & \bf 48 \\ \hline
 $(15,90)$
  
                & 31 & 26 & 26 & 31 & 30 & 28 & 32 & \bf 33 & 30 & 31\\
               
$(15,120)$
 
               & 35 & \bf 39 & 36 & 35 & 36 & 35 & 35 & 33 & 34 & 32\\
                
$ (15,150)$
 
&  \bf 48 & 47 & \bf 48 & 47 & \bf 48 & 47 & 47 & \bf 48 & \bf 48 & 45\\
                 
$ (15,180)$
 
  & 40 & 41 & 40 & 42 & 42 & 42 & 40 & 40 & \bf 43 & 38 \\ \hline
 $(20,160)$
  
                & 43 & \bf 45 & 44 & 41 & 42 & 44 & 43 & 43 & \bf 45 & 43\\
              
$(20,200)$
 
               & \bf 46 & \bf 46 & 45 & 45 & 43 & 45 & 45 & 40 & 41 & 41\\
                
$ (20,240)$
 
                & 35 & 32 & 31 & 33 & 34 & 35 & 34 & 32 & 35 & \bf 36 \\
                 
$ (20,280)$
 
                & 35 & 38 & 38 & 38 & 39 & \bf 41 & 40 & \bf 41 & 40 & 38 \\
 \hline
\end{tabular}
\end{center}}
\end{table}

\subsection{Sparsely-used orthogonal complete dictionary learning}

Given a set of data $Y = [{\bf y}_1,{\bf y}_2,\ldots,{\bf y}_m] \in \mathbb{R}^{n\times m}$, the sparsely-used
orthogonal complete dictionary learning (ODL) seeks a dictionary that can sparsely represent $Y$.  
More specifically, ODL seeks an orthogonal matrix $X = [{\bf x}_1,{\bf x}_2,\ldots,{\bf x}_n] \in \mathbb{R}^{n\times n}$ and a sparse matrix $S\in \mathbb{R}^{n\times m}$ such that $Y \approx X S$. The matrix $X$ is called an orthogonal dictionary. We refer to \cite{JuSun} for more details of this model. This problem can be modeled as an $\ell_0$ minimization problem \cite{Spielman}: 
\begin{eqnarray}\label{od0}
 \min \ \frac{1}{m} \sum_{i=1}^m  \|{\bf y}_i^\top X\|_0,\quad {\rm s.t.}\ X \in {\rm St}(n,n),
\end{eqnarray}
where ${\rm St}(n,n) = \{X \in \mathbb{R}^{n\times n}\ :\ X^\top X = I_n\}$ is the Stiefel manifold. To overcome the computational difficulty of the $\ell_0$ minimization model, the $\ell_0$ term is usually replaced by the $\ell_1 $ norm in the literture, which leads to the following $\ell_1$ minimization problem for ODL \cite{Spielman,JuSun}:
\begin{eqnarray}\label{od1}
\min \ \frac{1}{m} \sum_{i=1}^m
 \|{\bf y}_i^\top  X\|_1,\quad {\rm s.t.}\ X \in {\rm St}(n,n).
\end{eqnarray}
Here we again consider the $\ell_p$  ($0<p<1$) quasi-norm minimization model
\begin{eqnarray}\label{odp}
\min \ \frac{1}{m}\sum_{i=1}^m \|{\bf y}_i^\top  X\|_p^p, \quad {\rm s.t.}\ X \in {\rm St}(n,n),
\end{eqnarray}
and apply our RSSD method to solve it.  
We now specify the details. The tangent space of the Stiefel manifold is 
\[\T_{X}{{\rm St}(n,n)} := \{\xi \in \mathbb{R}^{n\times n} \ :\ \xi^\top X + X^\top \xi = 0\}.\] 
We use the QR factorization as the retraction on the Stiefel manifold, which is given by
$R_{X}(\xi) = {\rm qf}(X+\xi)$. Here ${\rm qf}(A)$ denotes the $Q$ factor of the QR decomposition of $A$.

In \cite{Li}, Li \etal proposed a Riemannian subgradient method and its variants -- Riemannian incremental subgradient method and Riemannian stochastic subgradient method -- for solving the $\ell_1$ minimization problem \eqref{od1}. In this section, we compare our RSSD for solving the $\ell_p$ minimization problem \eqref{odp} and compare its performance with the algorithms proposed in \cite{Li} for solving \eqref{od1}. We thus generate the synthetic data for ODL in a similar manner as \cite{Li}, which is detailed below. We first generate the underlying orthogonal dictionary ${X}^*\in {\rm St}(n,n)$ with $n =30$ whose entries are drawn according to standard Gaussian distribution. The number of samples $m = \lfloor 10 \cdot n^{1.5}\rfloor=1643$. The sparse matrix ${S}^* \in\mathbb{R}^{n\times m}$ is generated such that the entries follow the Bernoulli-Gaussian distribution with parameter 0.5. Finally, we set $Y = X^*S^*$. We generate 50 instances using this procedure. For each instance, we generate two different initial points: one is a standard Gaussian random vector denoted as $x_{0}^{\rm Gauss}$, and the other one is a uniform random vector denoted as $x_0^{\rm uniform}$.  
For the ease of presentation, we denote the three algorithms in \cite{Li} -- Riemannian subgradient method, Riemannian incremental subgradient method, and Riemannian stochastic subgradient method -- as R-Full, R-Inc and R-Sto, respectively.  
We use our RSSD to solve the $\ell_p$ minimization model \eqref{odp} with $p=0.001$. Moreover, we again truncate the entries of $Y^\top \hat X$ as
\begin{eqnarray*}
 (Y^\top \hat X)_{ij} =0, \quad \mbox{if}\ |(Y^\top \hat X)_{ij}|< \tau,
\end{eqnarray*}
where $\tau>0$ is a pregiven tolerance, and $\hat{X}$ is the computed solution.  
We use the same parameters in \eqref{para-RSSD} for RSSD. The codes for R-Full, R-Inc and R-Sto were downloaded from the author's webpage\footnote{\url{https://github.com/lixiao0982/Riemannian-subgradient-methods}.}.

{All the algorithms were run in MATLAB (R2018b) on a notebook with 1.80GHz CPU and 16GB of RAM.  For each instance, we terminated the algorithm when the CPU time reaches 50 seconds. 
} 
We report the average of the sparsity level of $Y^\top \hat X$ over 50 instances in Table \ref{ODL}, where the sparsity level is computed by 
\begin{eqnarray*}
{\textbf{sparsity level}} = \frac{{\text{number of zero entries of}}\ Y^\top \hat X }{mn}.
\end{eqnarray*}
Note that the desired sparsity level of $Y^\top \hat X$ is 0.5 because of the way that  $S^*$ was generated. We see from Table \ref{ODL} that the $\ell_p$ minimization model with $p=0.001$ solved by our RSSD method provides the best results in terms of the sparsity level. 

{Moreover, we plot the trajectory of the sparisty level in Figures \ref{fig:Gaussian-accuvscpu} and \ref{fig:Uniform-accuvscpu}. From these figures, it is clear that the $\ell_p$ minimization model (\ref{odp}) with $p=0.001$ solved by our RSSD method provides the best results in terms of sparsity level. More specifically, our RSSD method can improve the sparsity to the desired level, while the other three algorithms stopped making progress after about one second.}

\begin{table}[tbhp]
\label{ODL}
\footnotesize{
\begin{center}
\caption{Average of sparsity levels of computed solutions from 50 instances 
}
\renewcommand\arraystretch{1.1}
\addtolength{\tabcolsep}{3pt}
\begin{tabular}{c|ccc|c}
\hline
\multirow{2}{*}{Initial points} & \multicolumn{3}{c}{$\ell_1$ minimization model} \vline &  $\ell_p$ model,  $p=0.001$\\ \cline{2-5}
                                & R-Full & R-Inc & R-Sto &  RSSD\\ \hline
$ x_0^{\rm Gauss}$, $\tau = 10^{-4}$ & 0.3727 & 0.3857  &  0.3456 &\bf 0.5000  \\
$ x_0^{\rm Gauss}$, $\tau = 10^{-5}$ & 0.3697 & 0.3852  &  0.3450 &\bf 0.4895  \\ \hline
$x_0^{\rm Uniform}$, $\tau = 10^{-4}$ & 0.3727 & 0.3784  & 0.3234 &\bf 0.5000 \\
$x_0^{\rm Uniform}$, $\tau = 10^{-5}$ & 0.3675 & 0.3773  &  0.3222 &\bf 0.4915\\ \hline
\end{tabular}
\end{center}}
\end{table}

\begin{figure}[tbhp] 
\centering 
 {\includegraphics[width=0.4\textwidth]{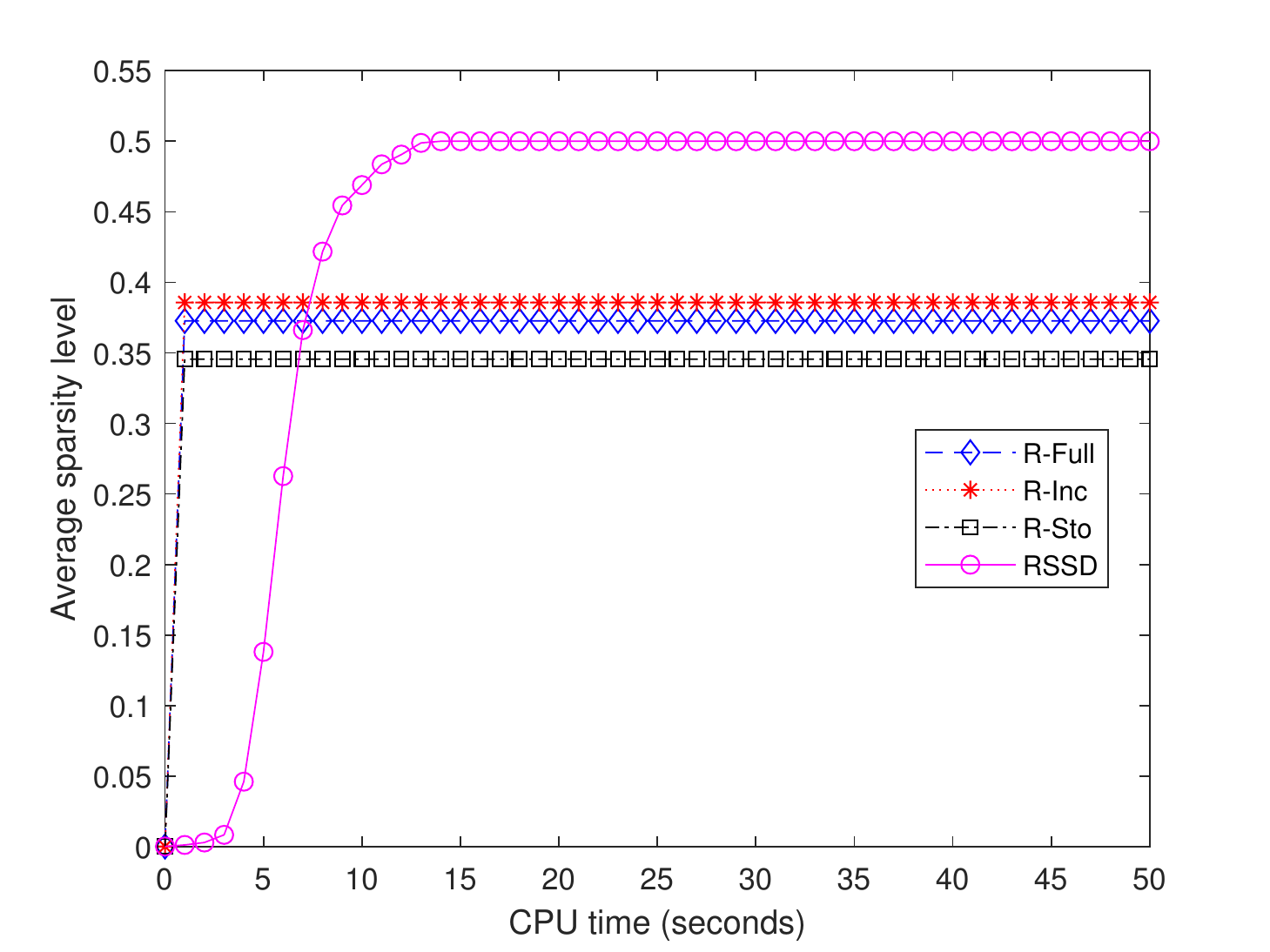}
 \includegraphics[width=0.4\textwidth]{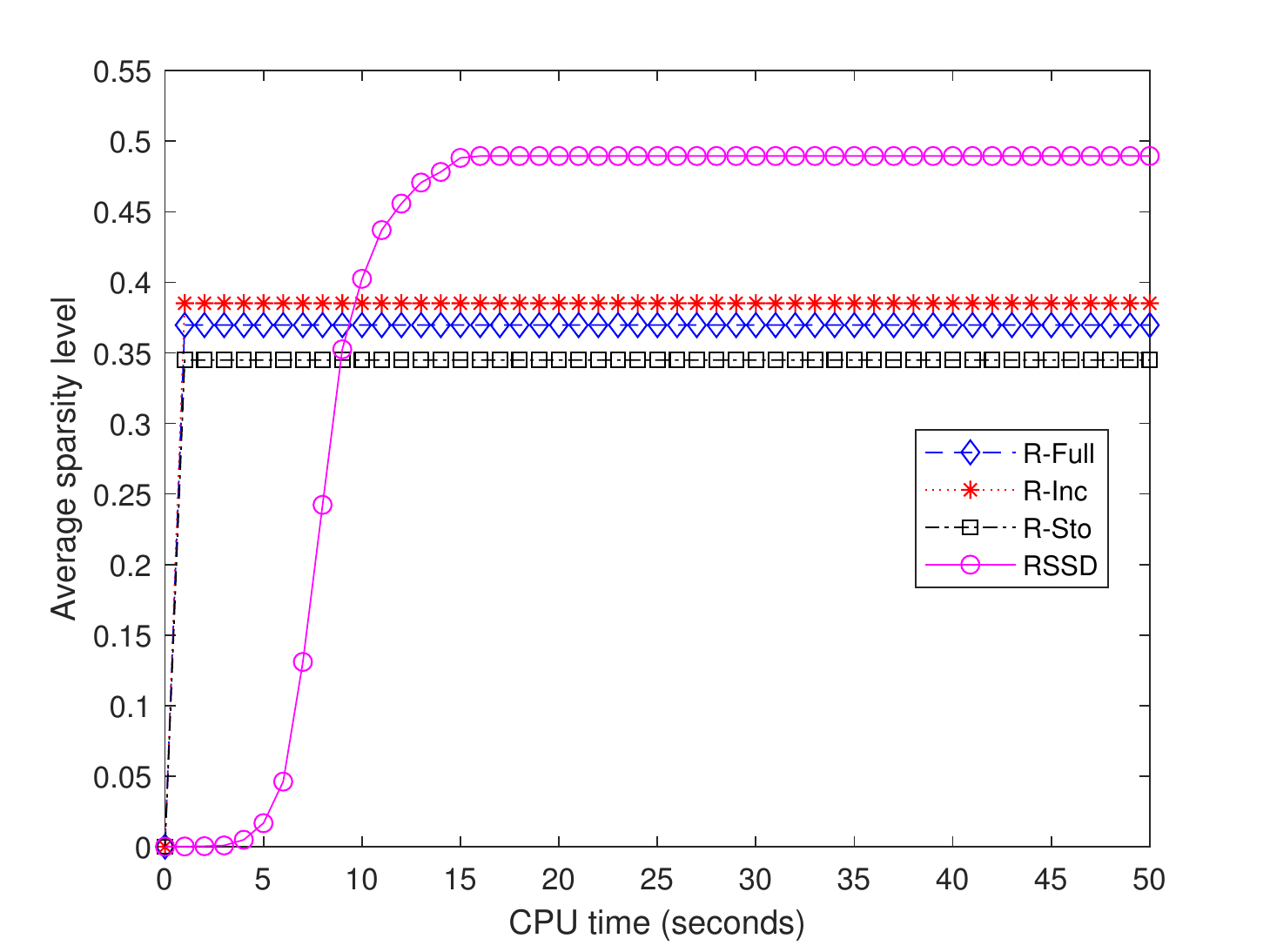}}
  \par\vspace{0pt}
{\caption{
\small{Average sparsity level versus CPU time of 50 instances using Guassian initial points. Left: $\tau = 10^{-4}$; Right: $\tau = 10^{-5}$}. }\label{fig:Gaussian-accuvscpu}}
\end{figure}

\begin{figure}[tbhp] 
\centering 
 {\includegraphics[width=0.4\textwidth]{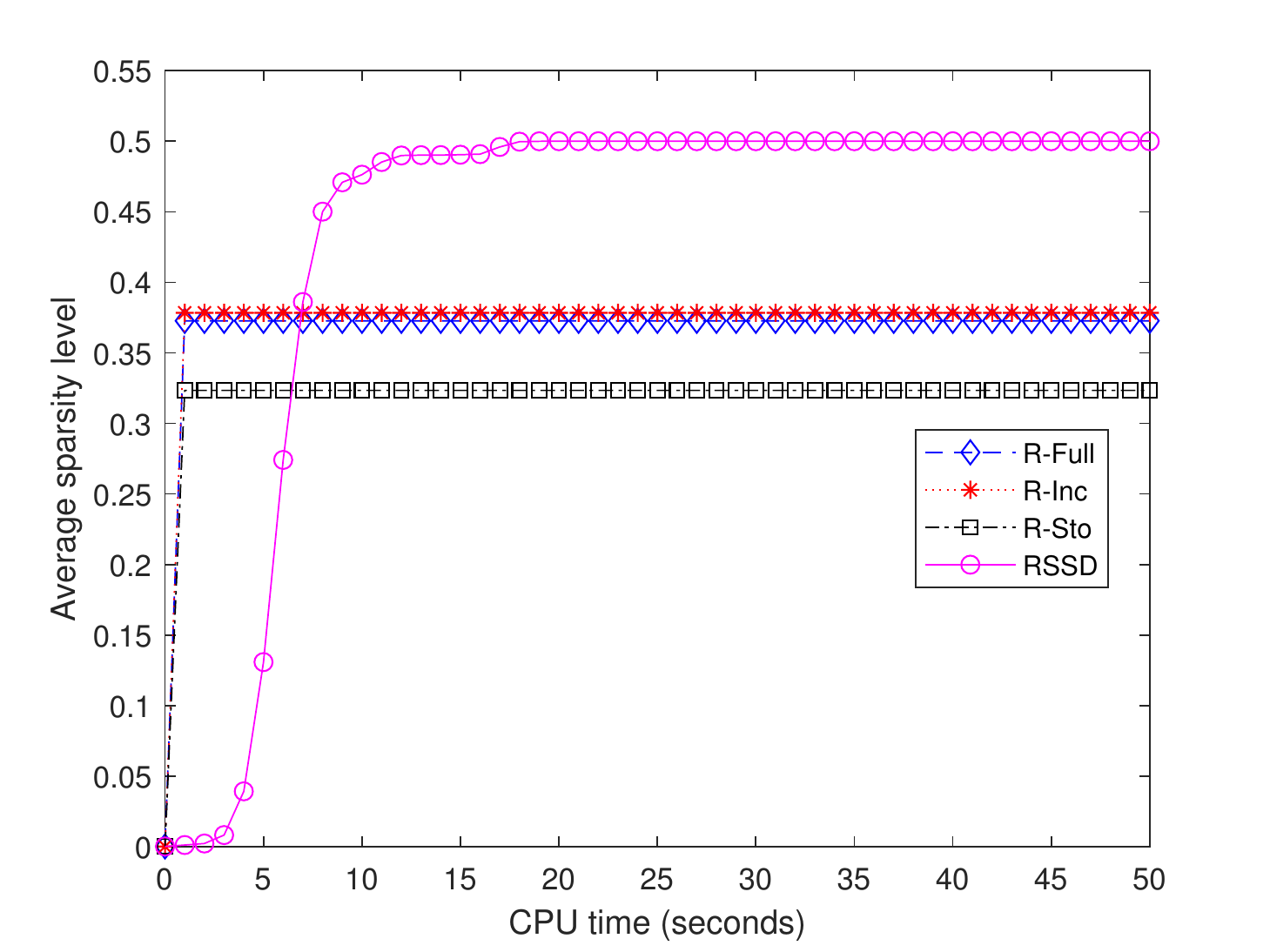}
 \includegraphics[width=0.4\textwidth]{Fig2-Uniform-tau1-eps-converted-to}}
  \par\vspace{0pt}
{\caption{
\small{Average sparsity level versus CPU time of  50 instances using uniform initial points. Left: $\tau = 10^{-4}$; Right: $\tau = 10^{-5}$}. }\label{fig:Uniform-accuvscpu}}
\end{figure}

\section{Concluding remarks}\label{sec:conclude}

In this paper, we developed RSSD, a novel Riemannian smoothing steepest descent method, for minimizing a non-Lipschitz function over Riemannian submanifolds. We studied some useful concepts such as the Riemannian generalized subdifferentials, and Riemannian gradient sub-consistency. We proved that any accumulation point generated by our RSSD method is a stationary point associated with the smoothing function employed in the method, which is necessary for local optimality of \eqref{model}. Moreover, under the Riemannian gradient sub-consistency, we also proved that any accumulation point is a limiting stationary point of \eqref{model}. Numerical results on finding a sparse vector in a subspace and the sparsely-used orthogonal complete dictionary learning demonstrate the advantage of the non-Lipschitz minimization models and the efficiency of our RSSD method.

\section*{Acknowledgements} We are very grateful to Professor Wen Huang of Xiamen University for providing the C++ code for the Riemannian BFGS method, and Hui Shi for helps on the numerical experiments.

\end{document}